\newtheorem{theorem}[subsection]{Theorem}
\newtheorem{lemma}[subsection]{Lemma}
\newtheorem{sublemma}[subsection]{Sublemma}
\newtheorem{corollary}[subsection]{Corollary}
\theoremstyle{definition}
\newtheorem{definition}[subsection]{Definition}
\newtheorem{remark}[subsection]{Remark}
\newcommand{\ii}{{\iota}}
\newcommand{\Z}{\mathbb{Z}}
\newcommand{\Q}{\mathbb{Q}}
\newcommand{\sign}{\mathrm{sign}}
\def\G{{\mathcal G}}
\newcommand{\inv}{^{-1}}
               \def \cT {{\mathcal T}}
             \def \cH {{\mathcal H}}
             \def \cF {{\mathcal F}}
             \def \cS {{\mathcal S}}
\def \cP {{\mathcal P}}
\def\R{\mathbb{R}}
\def\Q{\mathbb{Q}}
\def\Z{\mathbb{Z}}
\def\inv{^{-1}}
\DeclareMathOperator{\vcd}{vcd}
\DeclareMathOperator{\vol}{vol}
\DeclareMathOperator{\ver}{vert}
\newcommand{\GL}{\mathrm{GL}}
\newcommand{\SL}{\mathrm{SL}}
\begin{document}

\title[Explicit sharbly cycles at the vcd for $\SL_n(\Z)$]
{Explicit sharbly cycles at the virtual cohomological dimension for $\SL_n(\Z)$}

\author{Avner Ash} \address{Boston College\\ Chestnut Hill, MA 02445}
\email{Avner.Ash@bc.edu} \author{Paul E. Gunnells}
\address{University of Massachusetts Amherst\\ Amherst, MA 01003}
\email{gunnells@umass.edu} \author{Mark McConnell}
\address{Princeton University\\ Princeton, New Jersey 08540}
\email{markwm@princeton.edu}

\keywords{Cohomology of arithmetic groups, Voronoi complex, Steinberg
module, modular symbols}

\subjclass{Primary 11F75; Secondary 11F67, 20J06, 20E42}

\begin{abstract}
Denote the virtual cohomological dimension of $\SL_n(\Z)$ by
$t=n(n-1)/2$. Let $St$ denote the Steinberg module of $\SL_n(\Q)$
tensored with $\Q$.  Let $Sh_\bullet\to St$ denote the sharbly
resolution of the Steinberg module.  By Borel-Serre duality, the
one-dimensional $\Q$-vector space $H^0(\SL_n(\Z), \Q)$ is isomorphic
to $H_t(\SL_n(\Z),St)$.  We find an explicit generator of
$H_t(\SL_n(\Z),St)$ in terms of sharbly cycles and cosharbly cocycles.
These methods may extend to other degrees of cohomology of
$\SL_n(\Z)$.  
\end{abstract}

\maketitle

\section{Introduction}\label{intro} The cohomology of $\SL_n(\Z)$ with
rational coefficients is an object of great interest.  Not that much
is known.  The stable cohomology $\varprojlim\limits_{n} H^i(
\SL_n(\Z), \Q)$ was determined by Borel.  More recently, the
restriction of the stable cohomology to $H^*( \SL_n(\Z), \Q)$ for a
fixed $n$ was determined by Franke.  See \cite{GKT} for a good
exposition of Franke's result.  Very recently some other classes,
related to the Borel classes, have been found \cite{FB}, cuspidal
cohomology for $\SL_n(\Z)$ for certain $n$ has been constructed
\cite{bcg}, and many new cohomology classes for $\SL_n(\Z)$ in low
degree but above the stable range have been determined in \cite{GG}.
New cohomology classes for $\SL_n(\Z)$ in very high degree appear
in~\cite{AA}, where the main result of this paper (for $n=3$) is used
in a crucial way.

The theorem of Borel says that $H^i( \SL_n(\Z), \Q)$ equals the stable
cohomology if $i$ is small compared with $n$.  The other end of the
range is where $i$ is near the virtual cohomological dimension (vcd)
$t=n(n-1)/2$ of $\SL_n(\Z)$.  The authors of~\cite{CFP} have
conjectured that $H^{t-i}( \SL_n(\Z), \Q)=0$ if $i<n-1$.  See their
paper for references to proofs of the conjecture for $i=0,1$ all $n$,
and for $i<n-1$ for $n\le 7$.  It has also been proven for $i=2$, all
$n$: see \cite{BMPSW}.

 There have been explicit computations of $H^*( \SL_n(\Z), \Q)$ for
$n\le 12$.  Complete computation for $n=2$ is classical, for $n=3$ is
in \cite{Soule}, for $n=4$ in \cite{LS2} and for $n=5,6,7$ in
\cite{EVGS}.  Partial results for $8\le n\le 11$ are in \cite{SEVKM}.

One way to study  $H^i( \SL_n(\Z), \Q)$ is to use the Borel-Serre isomorphism
\[
H^i( \SL_n(\Z), \Q)\approx H_{t-i}( \SL_n(\Z), St),
\] where 
$St$ denotes the Steinberg module of $\SL_n(\Q)$ tensored with $\Q$.
To study $H_{t-i}( \SL_n(\Z), St)$ we can use the sharbly resolution
of $St$.

In this paper we begin to study the problem of finding explicit
realizations of cohomology classes of $\SL_n(\Z)$ in terms of the
sharbly resolution.  Even the case of $H^0$ is far from obvious.  Let
$G$ be a subgroup of finite index in $\SL_n(\Z)$.  By Borel-Serre
duality \cite{B-S}, the one-dimensional $\Q$-vector space $H^0(G, \Q)$ is
isomorphic to $H_t(G,St)$.  In this paper we find an explicit
generator $z_G$ of $H_t(G,St)$ in terms of sharbly cycles.  To show
that $z_G\ne0$ we use cosharbly cocycles, as defined in
\cite{unstable}.

The constructions and proofs involved have some interesting twists.
To construct the desired sharbly cycles, we use the beautiful theory
of regular triangulations of polytopes, first developed
in~\cite{GKZ}.  The construction of the dual cosharbly cocycles
involves the theory of scissors congruences in Euclidean space, as
found for example in~\cite{dupont} .

 We hope to extend these ideas further to other degrees of cohomology
of $\SL_n(\Z)$, perhaps following the ideas in~\cite{AA}.
 
We would like to thank Bruno Kahn for drawing our attention to
Dupont's work \cite{dupont}.  We also thank the referee for many helpful comments and especially for suggesting we add more detail for the cases $n=4$ and $n=5$.  This led us to discover a gap in our treatment of flips, which has now been fixed.

\section{The Steinberg module and the sharbly resolution}\label{St}

The \emph{Tits building} $T_n$
is the simplicial complex whose vertices are the proper nonzero
subspaces of $\Q^n$ and whose simplices correspond to flags of
subspaces.  By the
Solomon--Tits theorem $T_{n}$ has the homotopy type of a wedge of
$(n-2)$-dimensional spheres.  It is a left $\GL_n(\Q)$-module and
therefore so is its homology.  
\begin{definition}
We define the \emph{Steinberg module} $St$
to be the reduced homology of the Tits building with $\Q$-coefficients:
\[
St=\widetilde H_{n-2}(T_n,\Q).
\]
\end{definition}
Note: the Steinberg module is usually defined as the reduced homology 
with $\Z$-coefficients, but if we tensor that with $\Q$, we obtain what we are here calling $St$.

\begin{definition}\label{sh}
The \emph{Sharbly complex} $Sh_{*} $ is the following
complex of left $\GL_n(\Q)$-modules.   As an $\Q$-vector space,
$Sh_{k}$ is generated by 
symbols $[v_1,\dots,v_{n+k}]$, where the $v_i$ are nonzero column vectors in
$\Q^n$, modulo the submodule generated by the following elements:

(i) $[v_{\sigma
(1)},\dots,v_{\sigma(n+k)}]-\sign (\sigma)[v_1,\dots,v_{n+k}]$ for all
permutations $\sigma$;

(ii) $[v_1,\dots,v_{n+k}]$ if $v_1,\dots,v_{n+k}$ do not span $\Q^n$; and

(iii) $[v_1,\dots,v_{n+k}]-[av_1,v_{2},\dots,v_{n+k}]$ for all $a\in
\Q^\times$.

\noindent
The action of $g\in\GL_n(\Q)$ is 
given by $g[v_1,\dots,v_{n+k}]=[gv_1,\dots,gv_{n+k}]$.

\noindent The boundary map $\partial \colon Sh_{k} \rightarrow
Sh_{k-1}$ is given by 
\[ 
\partial([v_1,\dots,v_{n+k}])=
\sum_{i=1}^{n+k} (-1)^{i+1}[v_1,\dots,\widehat{v_i},\dots v_{n+k}],
\]
where as usual $\widehat{v_i}$ means to delete $v_{i}$.

We call an element of $Sh_k$ a \emph{$k$-sharbly} and an expression of
the form $[v_1,\dots,v_{n+k}]$ a \emph{basic sharbly}, even if
$v_1,\dots,v_{n+k}$ do not span $\Q^n$.  Sharblies originally appeared
in the work of Lee--Szczarba \cite{LS}, hence the name.
\end{definition}

Theorem 5 in \cite{AGM5} immediately implies:

\begin{theorem}\label{Sh}  There is a map of $\GL_n(\Q)$-modules 
 $Sh_0 \to St$
such that 
the following is an exact sequence of $\GL_n(\Q)$-modules:
\[
\cdots\to Sh_k \to Sh_{k-1} \to \cdots \to Sh_0 
\to St \to 0.
\]
\end{theorem}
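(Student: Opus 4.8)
The plan is to reduce the statement entirely to the cited Theorem 5 of \cite{AGM5} by exhibiting a dictionary between the sharbly complex as defined in Definition \ref{sh} and the complex appearing there. First I would recall what Theorem 5 of \cite{AGM5} produces: an explicit acyclic resolution of the Steinberg module $St=\widetilde H_{n-2}(T_n,\Q)$ by modules built from tuples of vectors in $\Q^n$, equipped with a $\GL_n(\Q)$-action and an alternating-sum boundary map. The core observation is that relations (i), (ii), (iii) in Definition \ref{sh} are precisely the relations imposed in that construction: (i) is antisymmetry under permutation of the vectors, (ii) kills degenerate tuples that fail to span, and (iii) records that only the lines $\Q v_i \subset \Q^n$ matter, not the chosen representatives. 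So $Sh_k$ is canonically identified with the degree-$k$ term of the \cite{AGM5} resolution, and the boundary maps match on the nose since both are the standard alternating face map $\partial([v_1,\dots,v_{n+k}])=\sum_i(-1)^{i+1}[v_1,\dots,\widehat{v_i},\dots,v_{n+k}]$.

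Next I would check that the augmentation $Sh_0\to St$ is well defined and $\GL_n(\Q)$-equivariant. A basic $0$-sharbly $[v_1,\dots,v_n]$ that spans $\Q^n$ determines an ordered basis, hence a fundamental class of a top-dimensional sphere in (an apartment of) the Tits building $T_n$, i.e.\ an element of $\widetilde H_{n-2}(T_n,\Q)=St$; a non-spanning tuple maps to $0$, which is forced by relation (ii), and scaling a vector does not change the associated flags, which is forced by relation (iii). Permuting the vectors changes the orientation of the sphere by the sign of the permutation, matching relation (i). Equivariance is immediate from $g[v_1,\dots,v_n]=[gv_1,\dots,gv_n]$ and the fact that $\GL_n(\Q)$ acts on $T_n$ by permuting subspaces. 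Finally, exactness of
\[
\cdots\to Sh_k \to Sh_{k-1} \to \cdots \to Sh_0 \to St \to 0
\]
is then exactly the conclusion of Theorem 5 of \cite{AGM5} transported across the identification above.

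The only real point requiring care — and the step I would flag as the main obstacle — is verifying that the normalization conventions agree: \cite{AGM5} may index its resolution differently (e.g.\ by $n+k$ vectors versus $k$ vectors, or with a shifted degree), may use column versus row vectors, may take coefficients in $\Z$ rather than $\Q$ before tensoring, or may phrase the modules in terms of $\GL_n(\Q)$-orbits of configurations rather than free modules on symbols. None of these is a mathematical difficulty, but one must pin down the exact correspondence of generators and the exact sign conventions in the boundary map so that ``immediately implies'' is genuinely justified. Once the dictionary is fixed, the theorem follows with no further argument; I would therefore devote the bulk of the written proof to making that dictionary precise and leave the acyclicity itself as a citation.
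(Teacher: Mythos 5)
Your proposal is correct and follows essentially the same route as the paper: the paper's entire proof is the citation ``Theorem 5 in \cite{AGM5} immediately implies'' the statement, and your argument amounts to spelling out why that citation applies (matching the relations (i)--(iii), the boundary map, and the augmentation $Sh_0\to St$ via apartment classes). The extra care you devote to the dictionary and conventions is detail the paper omits, not a different method.
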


\begin{theorem}
Let $G$ be a subgroup of finite index in $\SL_n(\Z)$.  Then
$H_k(G, St)$ is isomorphic to the homology at the $k$-th place of the sequence
\[
\cdots \to Sh_{k+1}\otimes_{G} \Q \to
 Sh_k\otimes_{G} \Q \to Sh_{k-1}\otimes_{G} \Q
 \to  \cdots
\]
\end{theorem}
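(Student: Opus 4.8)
The plan is to identify $H_k(G,St)$ with the hyperhomology of $G$ acting on the sharbly resolution and then collapse that spectral sequence using a suitable acyclicity statement. Concretely, Theorem~\ref{Sh} gives a resolution $Sh_\bullet \to St \to 0$ of $\GL_n(\Q)$-modules (hence of $G$-modules by restriction). So $St$ is quasi-isomorphic, as a complex of $G$-modules, to the complex $Sh_\bullet$. Taking the total left derived functor of $(-)\otimes_{\Z G}\Q$ (equivalently $G$-coinvariants, equivalently group homology), we get
\[
H_k(G,St) \;=\; \mathbb{H}_k\bigl(G, Sh_\bullet\bigr),
\]
the $G$-hyperhomology of the sharbly complex. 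The point will be that each $Sh_k$ is, as a $G$-module, a direct sum of modules induced from finite subgroups, so that $H_j(G, Sh_k)=0$ for $j>0$; once that vanishing is in hand, the hyperhomology spectral sequence degenerates and computes exactly the homology of $Sh_\bullet\otimes_G \Q$, which is the claim.

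The key steps, in order, are as follows. First, recall that as a $\Q$-vector space $Sh_k$ has a basis indexed by $\SL_n(\Z)$-orbits (equivalently $\GL_n(\Q)$-orbits intersected appropriately) of basic sharblies $[v_1,\dots,v_{n+k}]$, so $Sh_k$ decomposes as a direct sum of $G$-modules $\Q[G/\Stab_G(\sigma)]$ running over a set of orbit representatives $\sigma$ of basic sharblies under $G$. Second, observe that each stabilizer $\Stab_G(\sigma)$ is \emph{finite}: a basic sharbly involves finitely many lines in $\Q^n$ that span, and an element of $G\subset\SL_n(\Z)$ fixing the associated symbol must permute these finitely many lines and act by scalars on each, which forces it to lie in a finite group (this is the standard finiteness-of-cell-stabilizers argument for $\SL_n(\Z)$ acting on the relevant configuration data). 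Third, by Shapiro's lemma $H_j\bigl(G, \Q[G/\Stab_G(\sigma)]\bigr) \cong H_j\bigl(\Stab_G(\sigma), \Q\bigr)$, which vanishes for $j>0$ because the group is finite and $\Q$ has characteristic zero; hence $Sh_k$ is an acyclic $G$-module for every $k$. Fourth, feed this into the hyperhomology spectral sequence $E^2_{p,q} = H_p\bigl(H_q(G, Sh_\bullet)\bigr) \Rightarrow \mathbb{H}_{p+q}(G, Sh_\bullet)$: the acyclicity kills all rows $q>0$, so $\mathbb{H}_k = H_k\bigl(H_0(G, Sh_\bullet)\bigr) = H_k\bigl(Sh_\bullet \otimes_G \Q\bigr)$. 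Combined with the first displayed identification, this is exactly the assertion of the theorem.

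The main obstacle is the finiteness of the stabilizers $\Stab_G(\sigma)$, together with being careful about what ``$\otimes_G\Q$'' means in the presence of relation (iii) in Definition~\ref{sh} (scaling a vector by $a\in\Q^\times$). One has to check that the chosen $\Q$-basis of $Sh_k$ really is permuted by $G$ up to sign, so that $Sh_k$ is a permutation module over $\Q G$ in the graded sense, and that the sign characters appearing (from relation (i)) are handled correctly — over $\Q$ the coinvariants of a sign-twisted permutation module are still computed termwise and the higher homology still vanishes, but this needs a remark. Everything else — the existence of the hyperhomology spectral sequence, Shapiro's lemma, vanishing of rational homology of finite groups — is standard homological algebra, so the substance of the proof is packaging Theorem~\ref{Sh} and these orbit/stabilizer facts correctly.
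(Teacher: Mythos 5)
Your proposal is correct and is essentially the paper's argument: the paper simply observes that the stabilizers in $G$ of nonzero basic sharblies are finite and then invokes Theorem 7 of \cite{AGM5}, which encapsulates exactly the degeneration you carry out by hand (resolution from Theorem~\ref{Sh}, hyperhomology spectral sequence, Shapiro's lemma, and vanishing of higher rational homology of finite groups, with the sign/scaling relations handled as you note). So you have unpacked the cited result rather than found a different route; no gap.
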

\begin{proof}
Because the stabilizers in $G$ of nonzero basic elements of $Sh_*$ are finite groups, the theorem follows easily from Theorem 7 of 
\cite{AGM5}.
\end{proof}

\begin{definition}
Set $[v_1,\dots,v_{n+k}]_G$ to be the image of $[v_1,\dots,v_{n+k}]$
in the coinvariants  $Sh_k\otimes_{G} \Q$.
\end{definition}

\section{The Voronoi cellulation}\label{Vor}

Let $n\ge2$.  Let $C_n$ be the set of positive definite real $n\times
n$ symmetric matrices.  It is an open cone in the vector space $Y_n$
of all real $n\times n$ symmetric matrices.  For each non-zero
subspace $W$ of $\Q^n$ defined over $\Q$, set $b(W)$ to be the
rational boundary component of $C_n$ consisting of the cone of all
positive semi-definite real $n\times n$ symmetric matrices whose
kernel is $W\otimes\R$.  The minimal Satake bordification $C_n^*$ of
$C_n$ is the union of $C_n$ with all the rational boundary components.
It is convex and hence contractible.  Note that the dimension of $Y_n$
is $n(n+1)/2$, while a rational boundary component of $C_n$ spans a
$\Q$-subspace of $Y_n$ of dimension $k(k+1)/2$ for some $k<n$.

From now on we fix $n$ and set $C=C_n$, $C^{*} = C_{n}^{*}$, and
$Y=Y_n$.

\begin{definition}
If $u$ is any  nonzero column vector in $\Q^n$, let ${}^{t}u$ be its
transpose, and let $u'=u{}^tu\in C^*$.
\end{definition}

We now describe the perfect Voronoi cellulation of $C$.  For more
detail see \cite[II.6]{AMRT} and \cite[Appendix]{G}.  If $v\in \Q^n$
is a nonzero column vector, then $v'$ is a rank 1 matrix in $C^*$, and
thus generates a rational boundary component of dimension 1.  If
$v_1,\dots,v_m$ are $m$ such vectors, we let $s(v_1,\dots,v_m)$ denote
the closed convex conical hull of $v'_1,\dots,v'_m$ in $C^*$.  We call
$v'_1,\dots,v'_m$ the \emph{vertices} of $s(v_1,\dots,v_m)$.  The
vertices are determined uniquely up to scalar multiples.  Their ordering is determined up to an even
permutation by the cone $s(v_1,\dots,v_m)$ together with an
orientation of it.

The perfect Voronoi cellulation of $C^*$ is given by the cells
$s_Q=s(v_1,\dots,v_m)$, where $Q$ runs over all positive definite real
$n\times n$ quadratic forms, and where the nonzero integral vectors
that minimize $Q$ over all integral vectors are exactly $\pm
v_1,\dots,\pm v_m$.  There is a left action of $\SL_n(\Z)$ on $C^*$
given by $\gamma\cdot x=\gamma x{}^t \gamma$.  The Voronoi cellulation
is stable under this action.

There are a finite number of Voronoi cells modulo $\SL (n, \Z)$.  A
Voronoi cell $s(v_1,\dots,v_{k})$ lies in a boundary component of
$X_n^*$ if and only if $v_1,\dots,v_{k}$ do not span $\Q^n$.

 This cellulation is called the perfect Voronoi cellulation for
the following reason.  A positive definite quadratic form with minimal
vectors $\pm v_1,\dots,\pm v_m$ is called \emph{perfect} if and only if $
v'_1,\dots, v'_m$ span the $\R$-vector space $Y$.  The top-dimensional
cones in the perfect Voronoi cellulation are the $s_Q$, where $Q$ runs
over all perfect forms.  

Let $G$ be a subgroup of finite index in $\SL_n(\Z)$, $R$ 
a set of
representatives of $G$-orbits of perfect forms, and
\[
U=\bigcup_{Q\in R} s_{Q}.
\]
If $G$ is torsionfree, then $U$ is a fundamental domain for the $G$-action
on $C^*$.  If $G$ is not torsionfree, the projection of $U$ to
$G\backslash C^*$ is still surjective, and the stabilizers in $G$ of
the cones $s_Q$ in $U$ are finite groups.

\section{Construction of the sharbly cycle: $G$ torsionfree}\label{cycle1}

Let $G$ be a subgroup of finite index in $\SL_n(\Z)$, let
$t=n(n-1)/2=\vcd(G)$, and $d=n(n+1)/2=n+t=\dim C$.  In this section and the next,
we construct sharbly cycles representing classes in 
$H_t(G, St)$.  In Section~\ref{cococycle} we will show that each of these classes are nonzero,
and therefore each one generates $H_t(G, St)\approx H^0(G, \Q)\approx \Q$.

In what follows we will need definitions from the Appendix and Theorem~\ref{glue}, so the reader may wish to read the Appendix before proceeding.
Fix compatible orientations on $C$ and $Y$. 

\begin{definition} \label{def1}\ 

\noindent $\bullet$  A \emph{tile} is an oriented top-dimensional cone in the perfect Voronoi decomposition of $C^*$, or its image in $G\backslash C^*$.

\noindent $\bullet$ A  \emph{facet} is an unoriented codimension 1 cone which is a face of a tile.

\noindent $\bullet$ Let $\cT$ be the set of all tiles $T$ where $T$ is given
the orientation induced from $Y$.  Let $\cF$ be the set of all facets $F$.

\noindent $\bullet$
For $T\in\cT$ let
$\Sigma(T)$ denote the set of all top-dimensional simplicial cones $s$ whose vertices are a subset of the vertices of $T$,  and where $s$ is given
the orientation induced from $C$.  

\noindent $\bullet$
For $F\in\cF$ let
$\Sigma(F)$ denote the set of all simplicial cones $s$ whose vertices are a subset of the vertices of $F$ and whose dimension equals the dimension of $F$.

\noindent $\bullet$
Let $X\in\cT$ or $\cF$.
A \emph{triangulation} of $X$ is a decomposition of $X$ into a collection of elements $s_i\in\Sigma(X)$ such that for $i\ne j$, $s_i\cap s_j$ is either empty or a common face of both $s_i,s_j$.  A \emph{regular triangulation} of $X$ is defined in the Appendix.

\noindent $\bullet$
If $U$ is a union of elements of $\cT$, a regular triangulation of $U$ is a regular triangulation of each $T\in U$. 

\noindent $\bullet$
Given a simplicial cone $s$ in $C^*$ whose vertices are $v'_i$ , and given an ordering $v_1,\dots,v_m$, set 
$s(v_1,\dots,v_m)=s$ and 
$[s]=[v_1,\dots,v_m]$ (a basic sharbly).  

\end{definition}
In the last bullet, if $s$ is a top-dimensional oriented simplicial cone, so that $m=d$,  then unless otherwise specified, we assume that $v_1,\dots,v_m$ are written in an order which induces the orientation on $s$ that is compatible with the fixed orientation of $Y$.
This is well-defined, because each $v_i$ is determined up to a scalar
multiple, and the order of the vertices is defined up to an even
permutation.  (See (i) and (iii) in Definition \ref{sh}.)

Let $G$  be a torsionfree subgroup of finite index in $\SL_n(\Z)$.  Let $\cS=\{s\}$ be
a set of top-dimensional oriented simplicial cones  whose
union is a fundamental domain for $G$ acting on $C^*$, obtained as follows:
 take a set of representatives $R_G$ of $G$-orbits of $\cT$, and for $U=\cup_{T\in R_G} T$, choose a regular triangulation of $U$, and let $\cS$ be the set of all the simplicial cones in the triangulation, oriented with the orientation induced by $Y$.  There is no reason that, in general, if $T_1,T_2\in R_G$, $g\in G$ and $gT_1, T_2$  meet in a facet $F$, that 
the triangulations on $F$ induced by $gT_1$ and $T_2$ should match, not even if $g=1$.
  
 \begin{definition}\label{flip}
   A \emph{flipon} is a basic $t$-sharbly $[v_1,\dots,v_d]$ such that 
 there is an affine subspace of $Y$ of dimension $d-2$ that contains $v_i'$ for all $i=1,\dots,d$. Its image in the $G$-invariants, $[v_1,\dots,v_d]_G$, is also called a flipon.
 \end{definition}

\begin{theorem}\label{tf}
 Let $G$ be torsionfree and choose $\cS$ as above.  Then there exist flipons 
 $[y^\alpha_1,\dots,y^\alpha_d]$ such that 
$\partial z_G = 0$, 
where \[z_G=\sum _{ s \in \cS} [s]_G+\sum_\alpha[y^\alpha_1,\dots,y^\alpha_d].\]
\end{theorem}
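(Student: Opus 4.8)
The plan is to compute $\partial z_G$ directly and track which terms survive. Write $z_G^0 = \sum_{s\in\cS}[s]_G$ for the "naive" part of the cycle. Since $\cS$ is obtained by regularly triangulating a fundamental domain $U=\bigcup_{T\in R_G}T$, the boundary $\partial z_G^0$ is a $(t-1)$-sharbly supported on the facets of the simplicial cones in $\cS$. First I would classify these facets into three types: (a) facets interior to some tile $T\in R_G$ (coming from the triangulation subdividing $T$); (b) facets lying on a shared facet $F$ of two tiles $T_1,T_2$, where $gT_1$ and $T_2$ both appear in the fundamental domain's closure for some $g\in G$; and (c) facets lying in the boundary $\partial C^*$, i.e.\ in a rational boundary component. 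The contributions of type (a) cancel in pairs automatically, because each interior facet is shared by exactly two top-dimensional simplices of the triangulation with opposite induced orientations; this is the standard fact that a triangulation of a cone is a cycle relative to its boundary. Type (c) facets contribute $0$ in $Sh_{t-1}$: a simplicial cone lying in a proper boundary component $b(W)$ has all its vertices $v_i'$ with $v_i\in W\subsetneq\Q^n$, so the $v_i$ do not span $\Q^n$ and the symbol vanishes by relation (ii) of Definition~\ref{sh}. (Here one uses the fact recalled in Section~\ref{Vor} that a Voronoi cell $s(v_1,\dots,v_k)$ lies in a boundary component iff the $v_i$ fail to span.)

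The heart of the matter is type (b). On a shared facet $F$ of $T_1,T_2\in R_G$ (more precisely, where $g T_1$ meets $T_2$ along $F$), the triangulation of $U$ induces two triangulations of $F$: one from the $T_2$-side, call it $\cS_F^+$, and one from the $T_1$-side transported by $g$, call it $g\cdot\cS_F^-$. Because the group elements $g$ act through $\SL_n(\Z)$ and the symbols are taken in the coinvariants $Sh_{t-1}\otimes_G\Q$, the $T_1$-side contributions are already $G$-translated into $[\,\cdot\,]_G$, so what remains on $F$ after cancellation against the $T_2$-side is exactly the difference of two triangulations of the same codimension-$1$ cone $F$, namely $\sum_{\sigma\in\cS_F^+}[\sigma]_G - \sum_{\tau\in\cS_F^-}[\tau]_G$, with signs fixed by the induced orientations. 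If the two triangulations agreed this would vanish; in general they do not, and I need to show this difference is a boundary of flipons. This is where the theory of regular triangulations enters: any two regular triangulations of $F$ are connected by a sequence of bistellar flips (this is the key input from \cite{GKZ}, and presumably Theorem~\ref{glue} of the Appendix packages exactly the statement that the discrepancy between two regular triangulations of a facet is $\partial$ of a sum of flipons). A single flip replaces one local triangulation of a sub-configuration by another, and the difference of the "before" and "after" $(t-1)$-sharblies is precisely $\partial$ of a basic $t$-sharbly $[y_1,\dots,y_d]$ all of whose vertices $y_i'$ lie in a common affine hyperplane section of the relevant $d-1$ dimensional cone over $F$ — i.e.\ a flipon in the sense of Definition~\ref{flip}, since the affine span of the $d$ points $y_i'$ has dimension $d-2$. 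Summing over all flips in all the connecting sequences, over all shared facets $F$, produces the collection $\{[y_1^\alpha,\dots,y_d^\alpha]\}$ in the statement.

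So the argument assembles as: (1) expand $\partial z_G^0$; (2) kill type-(a) terms by the triangulation-is-a-relative-cycle fact; (3) kill type-(c) terms by relation (ii); (4) on each shared facet $F$, invoke regular-triangulation flip-connectivity (Theorem~\ref{glue}) to write the residual type-(b) discrepancy as $\partial\!\left(\sum_\alpha [y_1^\alpha,\dots,y_d^\alpha]\right)$ with each summand a flipon; (5) conclude $\partial(z_G^0 + \sum_\alpha[y_1^\alpha,\dots,y_d^\alpha]) = 0$, which is $\partial z_G = 0$.

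The main obstacle — and, per the acknowledgments, the spot where the authors originally had a gap — is step (4), specifically making sure the flip-connectivity argument is uniform and consistent across all facets simultaneously. The subtlety is that a flip performed to reconcile triangulations on one facet $F$ may itself alter the triangulation on an adjacent facet $F'$ of the same tile, or on the interior; one must either (i) work with triangulations of the whole fundamental domain $U$ and flips that are global, checking that global regular triangulations are flip-connected and that each global flip contributes a single flipon to $\partial z_G$ while leaving the type-(a) cancellation intact, or (ii) first fix a compatible family of facet-triangulations (one per $G$-orbit of facets) and extend each to a regular triangulation of the tiles, reducing the facet discrepancy to zero by construction and pushing all flipons to the interior — but then one must re-examine whether interior flips still cancel. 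Getting the bookkeeping of orientations and $G$-translates right here, so that every stray $(t-1)$-sharbly is accounted for as the boundary of an honest flipon, is the delicate part; everything else is routine once Theorem~\ref{glue} is in hand.
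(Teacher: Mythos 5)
Your overall skeleton matches the paper's: expand $\partial\sum_{s\in\cS}[s]_G$, cancel the facets interior to a tile by the opposite-orientation argument, and group the surviving $(t-1)$-sharblies by facets $E=T_1\cap gT_2$ of tiles, where they form the difference of two regular triangulations of $E$; then connect the two triangulations by flips \`a la \cite{GKZ}. (Your type (c) is actually vacuous --- the paper notes that codimension-one faces of the simplicial cones cannot lie in $\partial C^*$ since that boundary has codimension at least $2$ --- but your relation-(ii) argument would be harmless.) The genuine gap is in your step (4), precisely at the sentence claiming that the before/after difference of a single flip ``is precisely $\partial$ of a basic $t$-sharbly $[y_1,\dots,y_d]$'' which is a flipon. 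That is true only when the flip has a circuit of maximal dimension (all $d$ vertices). For a non-maximal circuit $z_1,\dots,z_{p}$ with $p<d$, Theorem~\ref{glue}/Corollary~\ref{glue2} give that the flip discrepancy equals only the \emph{partial} alternating sum $\sum_{i\le p(\alpha)}(-1)^i[v^\alpha_1,\dots,\widehat{v^\alpha_i},\dots,v^\alpha_d]$, not the full boundary of the flipon: $\partial[v^\alpha_1,\dots,v^\alpha_d]$ has additional terms obtained by deleting the non-circuit vertices, and these are in general nonzero sharblies (affine degeneracy of the $v_i''$ in $H$ does not make $[v_1,\dots,\widehat{v_j},\dots,v_d]$ vanish, since relation (ii) only concerns spanning $\Q^n$). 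The pyramid example in the Appendix shows this concretely: the flip difference is $([1235]+[1345])-([2345]+[1245])$, while $\partial[12345]$ also contains the square $[1234]$. This is exactly the ``gap in our treatment of flips'' mentioned in the paper's acknowledgments, and it is not the difficulty you flag (flips on one facet interfering with adjacent facets --- flips are performed entirely inside the polytope $F\cap H$ and do not touch anything else); nor do your two proposed fixes address it.

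The paper's proof therefore has a substantial second half that your proposal is missing. After writing $\Phi_E$ via Corollary~\ref{glue2}, the leftover terms (deletions of non-circuit vertices) are collected into an error term $\Psi_G$, which satisfies $\partial\Psi_G=0$ because $\Phi$ is a boundary; one then shows $\Psi_G$ is itself the boundary of a sum of \emph{secondary} flipons, constructed by coning off all the terms of a rigidified universal lift $\Psi$ from a fixed vector $x\in\Z^n$. Each coned term $[x,v^\alpha_1,\dots,\widehat{v^\alpha_j},\dots,v^\alpha_d]$ is again a flipon because the underlying convex hull has dimension $d-3$ (here one uses that each summand of $\Psi$ contains a unique circuit). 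The remaining work --- Lemma~\ref{V} and Lemma~\ref{final} with its sublemma --- shows $[x,I]_G+[x,II]_G+[x,III]_G=0$; the delicate point is that the cancellations forcing $I=0$ in the coinvariants must persist after adjoining $x$, which the paper ensures by choosing distinguished $G$-orbit representatives via minimal faces of the Voronoi tessellation and using torsionfreeness (the stabilizer of a face meeting the interior of $C$ is trivial, so the group elements implementing the cancellations fix everything in sight, hence fix $x$'s role). Without this secondary-flipon/coning argument, your step (5) does not follow, so the proposal as written does not prove the theorem except in the special case where every needed flip has a maximal circuit (e.g.\ when all facets of tiles are simplicial, as for $n\le 4$, where no flipons are needed at all).
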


 Note that $z_G$ depends on a number of choices,  but we suppress that in the notation.
 
\begin{proof}
Begin by writing $s=s(w_1,\dots,w_d)$, where $w_1,\dots,w_d$ depend on $s$, and define
\[
\Phi:=\partial \sum _{ s \in \cS} [s]_G=\sum_{s=s(w_1,\dots,w_d)\in\cS} 
\sum_{i=1}^d (-1)^{i+1}[w_1,\dots,\widehat w_i, \dots, w_d]_G.
\] 
Note that our chosen orientation on $Y$ induces on the cone
$s(w_1,\dots,\widehat w_i, \dots, w_d)$ the orientation given by the
ordering of its vertices as written, times $(-1)^{i+1}$.

We need to show that $\Phi=\sum\partial[v]_G$ for some finite set of flipons $[v]_G$.
Break up the sum defining $\Phi$ by the tiles in $R_G$:

 \begin{equation}\label{phi}
\Phi:=\sum_{T\in R_G}  \sum _{ s=s(w_1,\dots,w_d) \in \cS, s\subset T} 
\sum_{i=1}^d (-1)^{i+1}[w_1,\dots,\widehat w_i, \dots, w_d]_G.
  \end{equation}
  
The boundary $C^*$ has codimension at least 2.  
Each facet $F$ of a tile has codimension 1 in $C$, so no facet can be contained in the boundary of $C^*$. 
Also, since $G$ is torsionfree, no element of $G$ can stabilize a facet, except for the identity.  The same goes for each of the $s(w_1,\dots,\widehat w_i, \dots, w_d)$. 

Suppose the interior of $u=s(w_1,\dots,\widehat w_i, \dots, w_d)$ is not contained in any facet. Then the interior of $u$ is contained in the interior of some tile $T$ and in the boundary of 
one of the top-dimensional simplicial cones in the triangulation of $T$.  There must be exactly one other $(t-1)$-sharbly whose associated cone also equals $u$, appearing in the sum 
defining $\Phi$, coming from the boundary of a different  top-dimensional simplicial  cone in the triangulation, also contained in $T$.  Since the orientations on $u$ induced by these two simplicial cones are opposite,  the two corresponding $(t-1)$-sharblies cancel out in the sum defining $\Phi$.

We can write the remaining terms in the sum arranged by facets.  Let
$F_G$ be the set of facets of tiles in $R_G$.  
 If $u=s(x_1,\dots,x_{d-1})$ is in the boundary of $s$, write $[u]^s$
for the basic sharbly $[x_1,\dots,x_{d-1}]$ where the $x_i$ have been
placed in an order that determines the orientation of $u$ induced from
$s$.  (Remember that we give all $s$'s the orientation induced from
$Y$.)

Therefore

 \begin{equation}\label{phi2}
\Phi=\sum_{F\in F_G}  \sum _{s\in \cS, u \in \partial s, u\subset F} 
[u]^s_G.
  \end{equation}
  We must show that $\Phi$ is a sum of boundaries of flipons.
  
  The tiles modulo $G$ provide a cellular decomposition of  $G\backslash C^*$.  
Because $G$ is torsionfree,  $G\backslash C$ is a manifold.  It follows that in $G\backslash C^*$, any facet $E$ of a tile is in the boundary of exactly two tiles and  the orientations on  the facet induced from  the two tiles
 are opposite to each other.
 
 Any $s$ appearing in the sum in~\eqref{phi2} is contained in a unique facet $E$.  Such a facet satisfies $E=T_1\cap gT_2$, with $T_1\ne gT_2$ for some $g\in G$, 
 $T_1, T_2\in R_G$.  Because $G$ is torsionfree, $g$ here is uniquely determined by $E$. 
  
Given $E\in F_G$, let $E=T_1\cap gT_2$ as above, and set
  \begin{equation}\label{phi3}
\Phi_E:=\sum_{s\in\cS, s\subset T_1, u\subset E\cap \partial s} [u]_G +
 \sum_{s\in\cS, s\subset gT_2, u\subset E\cap \partial s} [u]_G.
  \end{equation}
  
Then $\Phi=\sum_{E\in F_G}\Phi_E$.  The set $\{u\}$ in the first sum
constitute an oriented regular triangulation of $E$ and the set
$\{u\}$ in the second sum constitute a (perhaps different) oriented
regular triangulation of $E$.


 We proceed to show that 
$\Phi=\sum_{E\in F_G}\Phi_E$
is a sum of boundaries of flipons.  
It is not true that $\Phi_E$ individually is  a sum of boundaries of flipons if the corresponding flip has a non-maximal circuit.

Anticipating the notation we will use in Section~\ref{cococycle}, we have
the hyperplane $H$ in $Y$ that cuts $C$ transversally.  If $v\in\Q^n$, $v'$ is the rank 1 matrix $vv^t\in C^*$, and $v''$ is the element of $H$ which is in the ray from 0 through to $v'$.
Let $c(v_1,\dots,v_r)$ denote the convex hull of 
$v''_1,\dots,v''_r$.  Circuits and flips are defined in the Appendix.
Call $v_1,\dots,v_t\in\Q^n$ a circuit if and only if $v''_1,\dots,v''_t$ is a circuit in $H$.
  
  Let $A(E)$ index the sharblies contained in $\Phi_E$.
It follows from Corollary~\ref{glue2} that there exists 
  flipons $[v^\alpha_1,\dots,v^\alpha_d]_G$ such that 
  $(v^\alpha_1)'',\dots,(v^\alpha_{p(\alpha)})''$ is a circuit and 
 \[
 \Phi_E = 
 \sum_{\alpha\in A(E)}
\sum_{i=1}^{p(\alpha)} (-1)^i [v^{\alpha}_1,\dots, \widehat {v^{\alpha}_i},\dots, v^{\alpha}_{d}]_G.
 \]
 Here, 
  $(v^\alpha_1)'',\dots,(v^\alpha_{p(\alpha)})''$ is the circuit of the corresponding flip.

The theorem  follows from:
 \begin{lemma}
 Let $A=\cup_E A(E)$.
 There exist flipons  $[w^{\beta}_1,\dots, w^{\beta}_{d}]_G$ such that
\[
\sum_E \Phi_E = \partial(
 \sum_{\alpha\in A} [v^{\alpha}_1,\dots, v^{\alpha}_{d}]_G-
\sum_{\beta}
 [w^{\beta}_1,\dots, w^{\beta}_{d}]_G).
 \]
 \end{lemma}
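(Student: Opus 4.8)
The goal is to show the "residual" sum $\sum_E \Phi_E$ —- which, by the displayed formula from Corollary~\ref{glue2}, is already a $G$-invariant sum of the boundary-contributions coming from circuits of flips —- can be upgraded to an honest sum of boundaries of flipons, at the cost of introducing a finite correction term that is itself a sum of flipons $[w^\beta_1,\dots,w^\beta_d]_G$. The subtlety, flagged in the text just before the lemma, is that an individual $\Phi_E$ need \emph{not} be a boundary of flipons when its flip has a non-maximal circuit: a circuit of length $p(\alpha) < d$ produces a $(t-1)$-sharbly $[v^\alpha_1,\dots,\widehat{v^\alpha_i},\dots,v^\alpha_d]$ in which $d - p(\alpha)$ of the vectors are "free," i.e.\ not constrained by the circuit relation, so this is not literally $\partial$ of the flipon $[v^\alpha_1,\dots,v^\alpha_d]$.

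The plan is to compare $\sum_\alpha \partial[v^\alpha_1,\dots,v^\alpha_d]_G$ with $\sum_E \Phi_E$ term by term. For each $\alpha$, write out
\[
\partial [v^\alpha_1,\dots,v^\alpha_d]_G = \sum_{i=1}^{d} (-1)^{i+1}[v^\alpha_1,\dots,\widehat{v^\alpha_i},\dots,v^\alpha_d]_G.
\]
The terms with $1 \le i \le p(\alpha)$ reproduce (up to the sign $(-1)^i$ versus $(-1)^{i+1}$, which I will reconcile by choosing the indexing/orientation convention consistently with Definition~\ref{flip} and the circuit orientation) exactly the contribution of $\Phi_E$ coming from $\alpha$. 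The terms with $p(\alpha) < i \le d$ are the "extra" faces: each such face $[v^\alpha_1,\dots,\widehat{v^\alpha_i},\dots,v^\alpha_d]_G$ still contains the full circuit $(v^\alpha_1)'',\dots,(v^\alpha_{p(\alpha)})''$ among its vertices, hence lies in a $(d-2)$-dimensional affine subspace of $Y$ already. Collecting these extra faces into pairs — each $(t-1)$-sharbly of this type, being a genuine codimension-one cone carrying a circuit, is a facet of exactly two flipons among the $[v^\alpha]$ with matching/opposite induced orientations, OR is itself the boundary of a new flipon $[w^\beta_1,\dots,w^\beta_d]_G$ obtained by adjoining one more vertex so as to span — is the combinatorial heart of the argument. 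The existence and the pairing of these $[w^\beta]$ is again a consequence of the regular-triangulation gluing machinery of the Appendix (Theorem~\ref{glue} and Corollary~\ref{glue2}): a flip with a non-maximal circuit is realized inside a higher-dimensional flip, and the Appendix results say precisely how such flips decompose.

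Concretely, the steps I would carry out are: (1) Fix, once and for all, orientation conventions so that the $(-1)^i$ in the Corollary~\ref{glue2} formula and the $(-1)^{i+1}$ in $\partial$ are compatible; this is bookkeeping but must be done carefully because sign errors here would invalidate the cancellation. (2) For each $\alpha$, identify the flipon $[v^\alpha_1,\dots,v^\alpha_d]_G$ and observe $\partial$ of it equals the $\Phi_E$-contribution of $\alpha$ plus a remainder $\rho_\alpha := \sum_{i=p(\alpha)+1}^{d}(-1)^{i+1}[v^\alpha_1,\dots,\widehat{v^\alpha_i},\dots,v^\alpha_d]_G$. (3) Show $\sum_\alpha \rho_\alpha$ is a sum of boundaries of flipons $[w^\beta_1,\dots,w^\beta_d]_G$: this is where I invoke the Appendix — each $(t-1)$-sharbly appearing in some $\rho_\alpha$ lies on a facet $E'$ shared by two tiles (or on an internal simplicial wall), and the regular-triangulation-flip theory provides a flipon $[w^\beta]$ whose boundary accounts for it, with all the non-flipon faces of $\partial[w^\beta]$ cancelling in pairs for the usual orientation-reversal reason (the same manifold/gluing argument used earlier in the proof of Theorem~\ref{tf}). (4) Assemble: $\sum_E\Phi_E = \sum_\alpha\partial[v^\alpha]_G - \sum_\alpha\rho_\alpha = \partial\big(\sum_{\alpha\in A}[v^\alpha_1,\dots,v^\alpha_d]_G - \sum_\beta [w^\beta_1,\dots,w^\beta_d]_G\big)$, which is the claim.

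The main obstacle is step (3): controlling the remainders $\rho_\alpha$. One must verify that the extra faces genuinely organize themselves, via the Appendix's description of flips, so that their non-circuit contributions cancel and the circuit contributions are exactly $\partial[w^\beta]$ for flipons $[w^\beta]$. This is a recursion on the codimension of the circuit (a circuit of length $p < d$ is handled by flips one dimension down, whose own non-maximal circuits are handled one dimension further down, etc.), so a clean induction on $d - p(\alpha)$ is the way to package it, with the base case $p(\alpha) = d$ (maximal circuit) being the case where $\rho_\alpha = 0$ and $\partial[v^\alpha]$ is already a sum over the full flipon boundary. The finiteness of $A = \bigcup_E A(E)$ and of the resulting $\{\beta\}$ follows from finiteness of $F_G$ and of the number of simplicial cones in each triangulation, so no convergence issue arises.
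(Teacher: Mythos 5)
Your step (2) matches the paper's setup exactly: the paper also writes $\partial\sum_\alpha[v^\alpha]_G = \sum_E\Phi_E + \Psi_G$, where $\Psi_G$ is the sum of the "extra" faces $[v^\alpha_1,\dots,\widehat{v^\alpha_j},\dots,v^\alpha_d]_G$ with $j>p(\alpha)$. But your step (3) — the entire content of the lemma — is a genuine gap. You assert that each extra face is either "a facet of exactly two flipons among the $[v^\alpha]$ with matching/opposite induced orientations" or "the boundary of a new flipon obtained by adjoining one more vertex," and that the Appendix's flip machinery supplies this. Neither claim is supported: the extra faces are degenerate configurations (their convex hulls have dimension $d-3$, as the paper verifies), they are not walls of the triangulations of the facets $E$, and Theorem~\ref{glue}/Corollary~\ref{glue2} say nothing about how such degenerate $(t-1)$-sharblies pair up across different $\alpha$'s. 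Moreover a single $(t-1)$-sharbly can never be the full boundary of a $t$-sharbly, so "adjoining one vertex" only helps if you can show that all the other faces of the adjoined flipon cancel globally — and that is exactly the hard point. The paper even flags that your local strategy cannot work: its Remark states that the error terms cannot be disposed of one at a time, only their total sum is a boundary of secondary flipons. Your proposed induction on $d-p(\alpha)$ is not developed and there is no indication it closes this hole.

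The paper's actual argument is global and quite different in mechanism. It first proves $\partial\Psi_G=0$ and isolates the piece $I$ (Lemma~\ref{dpsi}); it then \emph{rigidifies} $\Psi_G$ by choosing, in each $G$-orbit, the distinguished representative whose minimal Voronoi face lies in a fixed set $\cF$ of orbit representatives (using torsionfreeness to see this representative is unique); and it cones the whole error term off from one fixed vector $x$, so the secondary flipons are $[x,v^\alpha_1,\dots,\widehat{v^\alpha_j},\dots,v^\alpha_d]$, which are flipons precisely because the underlying hulls are $(d-3)$-dimensional. The residual terms $[x,I]_G+[x,II]_G+[x,III]_G$ are then shown to vanish: $II+III$ by the boundary-of-boundary symmetry, and $[x,I]_G$ by the Sublemma, which shows that deleting a circuit vertex does not change the minimal face, so any $g\in G$ matching two terms of $I$ must fix a face in $\cF$, hence $g=1$ and the cancellation survives the coning. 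This compatibility of the $G$-cancellation with the added apex $x$ is the crux of the proof, and it is entirely absent from your proposal.
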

 We call the flipons $[w^{\beta}_1,\dots, w^{\beta}_{d}]_G$ ``secondary flipons''.
 \begin{proof}
Note that
\[
\partial
 [v^{\alpha}_1,\dots, v^{\alpha}_{d}]_G=
\sum_{i=1}^{d} (-1)^i [v^{\alpha}_1,\dots, 
\widehat{v^{\alpha}_{i}},\dots,
v^{\alpha}_{d}]_G.
\]
Define:
\[
\Psi_G=
 \sum_A 
 \sum_{j=p(\alpha)+1}^d
 (-1)^j
[v^{\alpha}_1,\dots, v^{\alpha}_{p(\alpha)}, 
v^{\alpha}_{p(\alpha)+1}, \dots, \widehat{v^{\alpha}_{j}},\dots,
v^{\alpha}_{d}]_G.
\]
Then
\[
\Psi_G=
\partial(\sum_A
 [v^{\alpha}_1,\dots, v^{\alpha}_{d}]_G))
 -\sum_E \Phi_E.
 \]
 
 \begin{remark}
 We can think of $\Psi_G$ as an error term.  In the example of the pyramid in the appendix, the error term would be the square $[1234]$.  There is no way to get rid of these error terms one at a time, but their sum will be the boundary of a sum of secondary flipons.
 \end{remark}
 
 We will show there exist flipons  
$[w^{\beta}_1,\dots, w^{\beta}_{d}]_G$ such that
\[\Psi_G=
\partial
\sum_{\beta}
 [w^{\beta}_1,\dots, w^{\beta}_{d}]_G.
\]

Since $\Phi=\sum_E \Phi_E$ is itself a boundary, 
$\partial\Psi_G=0$.  So there is a chance that $\Psi_G$ is indeed a boundary of the type desired.

$\Psi_G$ is a sum of sharblies 
\[
\pm
[v^{\alpha}_1,\dots, v^{\alpha}_{p(\alpha)}, 
v^{\alpha}_{p(\alpha)+1}, \dots, \widehat{v^{\alpha}_{j}},\dots,
v^{\alpha}_{d}].
\]
For brevity, fix  $\alpha$ and $j>p(\alpha)$ and write
\[
\eta=(-1)^j[v^\alpha_1, \dots, \widehat{v^\alpha_j},\dots,
v^\alpha_d],
\]
So $\Psi_G=\sum \eta$ and $\partial\Psi_G=0$.

Let's collect some facts about $\eta$.

\noindent$\bullet$ $\eta$ contains a circuit.

\noindent
Indeed, it contains the circuit 
$v^{\alpha}_1,\dots, v^{\alpha}_{p(\alpha)}$.  

\noindent$\bullet$ Let $c(\eta)=
c(v^\alpha_1, \dots, \widehat{v^\alpha_j},\dots,v^\alpha_d)$.  Then $c(\eta)$ has dimension exactly $d-3$.  
\noindent
Why?
A tile has dimension $d-1$.  A facet has dimension $d-2$,  
so each simplex in it, such as 
$c(v^{\alpha}_2,\dots, v^{\alpha}_{d})$,
 has dimension $d-2$.  
Since $v^\alpha_1,\dots, v^\alpha_{p(\alpha)}$ is a circuit, the dimension does not change when we remove $v^\alpha_1$.
So the dimension of
$c(v^{\alpha}_1,\dots, v^{\alpha}_{d})$ is $d-2$.  When we remove $v^\alpha_j$, the dimension either stays $d-2$ or goes down to $d-3$.  But it cannot stay $d-2$ because there are $d-1$ vectors in 
$(v^\alpha_1, \dots, \widehat{v^\alpha_j},\dots,v^\alpha_d)$, and at least one circuit, so its dimension is at most $d-2-1$. 

\noindent$\bullet$ $\eta$ contains a unique circuit.
\noindent
There cannot be two circuits contained in 
$v^{\alpha}_1,\dots, v^{\alpha}_{d}$.
This is because each circuit causes the dimension of
 $c(v^{\alpha}_1,\dots, v^{\alpha}_{d})$ to go down by 1 from what would be the case if 
$v^{\alpha}_1,\dots, v^{\alpha}_{d}$ were affinely independent.  
If $v^{\alpha}_1,\dots, v^{\alpha}_{d}$ were affinely independent then 
 $c(v^{\alpha}_1,\dots, v^{\alpha}_{d})$  would have dimension $d-1$.  So if there were two or more circuits, 
then the dimension of  $c(v^{\alpha}_1,\dots, v^{\alpha}_{d})$  would be $d-3$ or less, but it has dimension $d-2$.

We want to prove that  
$\Psi_G=\partial N$, where $N$ is a sum of flipons.
We have $\partial\Psi_G=0$ and
\[
\Psi_G=
 \sum_A 
 \sum_{j=p(\alpha)+1}^d
 (-1)^j
[v^{\alpha}_1,\dots, v^{\alpha}_{p(\alpha)}, 
v^{\alpha}_{p(\alpha)+1}, \dots, \widehat{v^{\alpha}_{j}},\dots,
v^{\alpha}_{d}]_G.
\]
Then $\partial\Psi_G=I+II+III$ where
\[
I=
 \sum_A 
 \sum_{j=p(\alpha)+1}^d
 (-1)^j
  \sum_{i=1}^{p(\alpha)}
 (-1)^i
[v^{\alpha}_1,
\dots, \widehat{v^{\alpha}_{i}},\dots,
v^{\alpha}_{p(\alpha)}, 
v^{\alpha}_{p(\alpha)+1}, \dots, \widehat{v^{\alpha}_{j}},\dots,
v^{\alpha}_{d}]_G;
\]
\[
II=
 \sum_A 
 \sum_{j=p(\alpha)+2}^d
 (-1)^j
  \sum_{i=p(\alpha)+1}^{j-1}
 (-1)^i
[v^{\alpha}_1,
\dots, 
v^{\alpha}_{p(\alpha)}, 
v^{\alpha}_{p(\alpha)+1}, \dots, 
\widehat{v^{\alpha}_{i}},\dots,
\widehat{v^{\alpha}_{j}},\dots,
v^{\alpha}_{d}]_G;
\]
\[
III=
 \sum_A 
 \sum_{j=p(\alpha)+1}^{d-1}
 (-1)^j
  \sum_{i=j+1}^{d}
 (-1)^{i-1}
[v^{\alpha}_1,
\dots, 
v^{\alpha}_{p(\alpha)}, 
v^{\alpha}_{p(\alpha)+1}, \dots, 
\widehat{v^{\alpha}_{j}},\dots,
\widehat{v^{\alpha}_{i}},\dots,
v^{\alpha}_{d}]_G.
\]
It is easy to see that $II+III=0$.  This is the usual fact that the boundary of a boundary is 0.  For an explicit proof in this case, see the proof of Lemma~\ref{V} below, and just erase $x$ from it everywhere.

We conclude:
\begin{lemma}\label{dpsi}
\[
I=
 \sum_A 
 \sum_{j=p(\alpha)+1}^d
 (-1)^j
  \sum_{i=1}^{p(\alpha)}
 (-1)^i
[v^{\alpha}_1,
\dots, \widehat{v^{\alpha}_{i}},\dots,
v^{\alpha}_{p(\alpha)}, 
v^{\alpha}_{p(\alpha)+1}, \dots, \widehat{v^{\alpha}_{j}},\dots,
v^{\alpha}_{d}]_G=0.
\]
\end{lemma}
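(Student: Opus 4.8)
The plan is to read off the identity $I=0$ directly from two facts already in hand above: that $\partial\Psi_G=0$, and that $II+III=0$.

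First I would recall why $\partial\Psi_G=0$. By construction $\Psi_G=\partial\bigl(\sum_{\alpha\in A}[v^\alpha_1,\dots,v^\alpha_d]_G\bigr)-\Phi$, and $\Phi=\partial\sum_{s\in\cS}[s]_G$ is itself a boundary; hence $\partial\Psi_G=\partial\partial\bigl(\sum_{\alpha\in A}[v^\alpha_1,\dots,v^\alpha_d]_G\bigr)-\partial\Phi=0-0=0$ in $Sh_{t-2}\otimes_{G}\Q$, using $\partial^2=0$.

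Next I would expand $\partial\Psi_G$ from the defining formula for $\Psi_G$. Every basic sharbly occurring in $\Psi_G$ has the form $[v^\alpha_1,\dots,v^\alpha_{p(\alpha)},v^\alpha_{p(\alpha)+1},\dots,\widehat{v^\alpha_j},\dots,v^\alpha_d]$ with $j>p(\alpha)$; applying $\partial$ deletes a second index $i$, which is either at most $p(\alpha)$, strictly between $p(\alpha)$ and $j$, or strictly larger than $j$, and grouping these three cases gives exactly $\partial\Psi_G=I+II+III$. The contributions in $II$ and $III$ range over the same unordered pairs of distinct indices from $\{p(\alpha)+1,\dots,d\}\setminus\{j\}$, one ordering in each group, carrying opposite signs; this is the usual boundary-of-a-boundary cancellation, which I would either verify by tracking the sign as the positions of the two deleted vertices shift, or quote from the proof of Lemma~\ref{V} (deleting the vertex $x$ there throughout). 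Hence $II+III=0$, and therefore $I=\partial\Psi_G-(II+III)=0$, which is the claim.

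I do not expect a genuine obstacle here: the lemma is a bookkeeping corollary of $\partial\Psi_G=0$, and the only place where a slip is possible is the sign accounting in $II+III=0$, which is routine once the positions of the two deleted vertices are tracked carefully.
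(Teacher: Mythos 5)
Your proposal is correct and follows the paper's own argument essentially verbatim: both derive $\partial\Psi_G=0$ from $\Psi_G=\partial\bigl(\sum_{\alpha\in A}[v^\alpha_1,\dots,v^\alpha_d]_G\bigr)-\Phi$ with $\Phi$ itself a boundary, then expand $\partial\Psi_G=I+II+III$ and kill $II+III$ by the standard boundary-of-a-boundary sign cancellation (the paper likewise points to the proof of Lemma~\ref{V} with $x$ erased), leaving $I=0$. No gap; the sign bookkeeping you flag is exactly the switch of the dummy indices $i$ and $j$ carried out in Lemma~\ref{V}.
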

  
We are going to cone off.  
First we have to rigidify $\Psi_G$.  
\begin{definition}
(1) Let $S$ be a subset of $C^*$ that is contained in some face of the Voronoi tessellation.  Let $f(S)$ denote the intersection of all the  faces of the Voronoi tessellation that contain $S$.  This is the \emph{minimal face} containing $S$.

(2)  $\ver(f(S))$ denotes the set of vertices of $f(S)$.  

(3) If $v_1, \dots,v_r\in \Q^n-\{0\}$, by abuse of terminology, we may refer to
$f(\{v_1'',\dots,v_r''\})$ as  the minimal face of $v_1,\dots,v_r$.

(4) $f(\alpha,j)$ denotes the minimal face of $\{v^{\alpha}_1,\dots,  \widehat{v^{\alpha}_{j}},\dots,v^{\alpha}_{d}\}$.

(5) $\cF$ is a set of representatives of the $G$-orbits of the set of  faces of the Voronoi tessellation.
\end{definition}

Note that $f(S)$ is a face of the Voronoi tessellation, and is uniquely determined by $S$.  So the definition makes sense, and $\ver(f(S))$ is a finite set.

\begin{definition}
For each $\alpha$ and $j>p(\alpha)$, within the $G$-orbit of 
the ordered tuple $( v^{\alpha}_1,\dots,  \widehat{ v^{\alpha}_{j}},\dots, v^{\alpha}_{d})$, 
let $( \tilde v^{\alpha}_1,\dots,  \widehat{ \tilde v^{\alpha}_{j}},\dots, \tilde v^{\alpha}_{d})$
be the element such that 
\[
f(\{\tilde v^{\alpha}_1,\dots,  \widehat{\tilde v^{\alpha}_{j}},\dots,\tilde v^{\alpha}_{d}\})\in\cF.
 \]
\end{definition}

For this to be weill-defined, we have to show that $( \tilde v^{\alpha}_1,\dots,  \widehat{ \tilde v^{\alpha}_{j}},\dots, \tilde v^{\alpha}_{d})$ is unique, in other words, if 
$f(g( v^{\alpha}_1,\dots,  \widehat{v^{\alpha}_{j}},\dots, v^{\alpha}_{d}))
=f( v^{\alpha}_1,\dots,  \widehat{v^{\alpha}_{j}},\dots, v^{\alpha}_{d})$,
we need to show that $g=1$.  Let $f=f( v^{\alpha}_1,\dots,  \widehat{v^{\alpha}_{j}},\dots, v^{\alpha}_{d})$.
Since the minimal face is unique,
$f(g( v^{\alpha}_1,\dots,  \widehat{v^{\alpha}_{j}},\dots, v^{\alpha}_{d})
=gf$.
So assume that $gf=f$. Since $c( v^{\alpha}_1,\dots,  \widehat{v^{\alpha}_{j}},\dots, v^{\alpha}_{d})$ has dimension $d-2$, it meets the interior of the cone $C$,
 and so does the minimal face $f$ containing it.  
 The stabilizer in $G$ of a face of the Voronoi tessellation that meets the interior of $C$ is finite, and hence trivial, since $G$ is torsion-free.
 So $g=1$.

Note that $[\tilde v^{\alpha}_1,\dots,  \widehat{ \tilde v^{\alpha}_{j}},\dots, \tilde v^{\alpha}_{d}]_G=[v^{\alpha}_1,\dots,  \widehat{ v^{\alpha}_{j}},\dots, v^{\alpha}_{d}]_G$.
 Now that we have chosen a distinguished representative from each $G$-orbit, we will remove the tildes and call it
$(v^{\alpha}_1,\dots,  \widehat{v^{\alpha}_{j}},\dots,v^{\alpha}_{d})$.

 \begin{definition}\label{juniv}
   A \emph{universal} sharbly is a sum of basic sharblies 
   $[v_1,\dots,v_r]$ before we take $G$-invariants.
  \end{definition}
  
We define the universal sharbly
\[
\Psi=
 \sum_A 
 \sum_{j=p(\alpha)+1}^d
 (-1)^j
[v^{\alpha}_1,\dots, v^{\alpha}_{p(\alpha)}, 
v^{\alpha}_{p(\alpha)+1}, \dots, \widehat{v^{\alpha}_{j}},\dots,
v^{\alpha}_{d}].
\]
Let $\Psi_G$ denote the image of $\Psi$ in the $G$-coinvariants.
We have not changed the identity of 
$\Psi_G$.  It is still equal to what it was before.

Next,
let $x\in\Z^n-\{0\}$ be chosen arbitrarily and fixed.  
Define the universal sharbly
\[
\Omega=
 \sum_A 
 \sum_{j=p(\alpha)+1}^d
 (-1)^j
[x,v^{\alpha}_1,\dots, v^{\alpha}_{p(\alpha)}, 
v^{\alpha}_{p(\alpha)+1}, \dots, \widehat{v^{\alpha}_{j}},\dots,
v^{\alpha}_{d}].
\]
Note that each summand in $\Omega$ is a flipon.  That is because 
\[c(v^{\alpha}_1,\dots, v^{\alpha}_{p(\alpha)}, 
v^{\alpha}_{p(\alpha)+1}, \dots, \widehat{v^{\alpha}_{j}},\dots,
v^{\alpha}_{d})\] has dimension $d-3$ as proved above, and when we throw in $x$ the dimension  either stays the same or only goes up to $d-2$.
(\footnote{
It doesn't matter that sometimes $x$ may also appear as one of the $v^\alpha_r$'s.
})

Let $\Omega_G$ denote the image of $\Omega$ in the $G$-coinvariants.  Then
\[
\partial\Omega_G=\Psi_G + [x,I]_G + [x,II]_G + [x,III]_G
\] 
with the obvious notation.  
The proof of the theorem will be completed when we show that 
\[
[x,I]_G + [x,II]_G + [x,III]_G=0.
\]
  
\begin{lemma}\label{V}
 $[x,II]_G + [x,III]_G=0$.  
\end{lemma}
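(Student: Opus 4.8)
The plan is to treat Lemma~\ref{V} as the standard bookkeeping behind $\partial\circ\partial=0$, with the fixed vector $x$ carried along passively. Recall that $[x,II]_G$ and $[x,III]_G$ denote the $\Q$-linear combinations of basic $(t-1)$-sharblies obtained from the displayed expressions for $II$ and $III$ by prepending $x$ to each basic sharbly symbol, keeping the same coefficients. It therefore suffices to show that these two combinations cancel term by term; in particular one never needs ``prepend $x$'' to be a well-defined operation on $Sh_{t-2}\otimes_{G}\Q$, since it is applied to one explicit sum of symbols and the cancellation we exhibit already takes place in the free $\Q$-module on basic sharbly symbols, hence descends to the $G$-coinvariants.

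First I would fix $\alpha$. In the $\alpha$-summand of each of $[x,II]_G$ and $[x,III]_G$, the only $v^\alpha_k$ that get omitted are those with $k>p(\alpha)$, and in each term exactly two of them are omitted. Writing the omitted pair as $a<b$ with $p(\alpha)+1\le a<b\le d$, the unique term of $[x,II]_G$ omitting $\{a,b\}$ is the one with $(i,j)=(a,b)$ (here $i<j$), and the unique term of $[x,III]_G$ omitting $\{a,b\}$ is the one with $(i,j)=(b,a)$ (here $j<i$). A short check of the summation ranges --- $p(\alpha)+1\le i\le j-1$ in $II$, and $j+1\le i\le d$ in $III$ --- shows that in each case $(i,j)\mapsto\{a,b\}$ is a bijection onto $\{(a,b):p(\alpha)+1\le a<b\le d\}$, so every term of $[x,II]_G$ and of $[x,III]_G$ occurs exactly once under this indexing.

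Next I would compare the two matching terms. Both are the same basic sharbly
\[
[x,v^\alpha_1,\dots,v^\alpha_{p(\alpha)},v^\alpha_{p(\alpha)+1},\dots,\widehat{v^\alpha_a},\dots,\widehat{v^\alpha_b},\dots,v^\alpha_d]_G,
\]
and their coefficients are $(-1)^{j}(-1)^{i}=(-1)^{a+b}$ in $[x,II]_G$ and $(-1)^{j}(-1)^{i-1}=(-1)^{a}(-1)^{b-1}=-(-1)^{a+b}$ in $[x,III]_G$, which are negatives of each other. Hence the two contributions cancel. Summing over all $\alpha$ and over all pairs $a<b$ accounts for every term and yields $[x,II]_G+[x,III]_G=0$.

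I do not expect a genuine obstacle: the content is precisely the classical identity $\partial\circ\partial=0$, and the only points requiring care are combinatorial --- verifying the bijection between the pairs $(i,j)$ occurring in $II$ (resp.\ $III$) and unordered pairs $\{a,b\}$, and checking that removing $v^\alpha_j$ before $v^\alpha_i$ in $III$ (the reverse of the order in $II$) is exactly what turns the coefficient $(-1)^{i}$ into $(-1)^{i-1}$, producing the sign flip. Erasing $x$ throughout this argument gives, verbatim, the proof that $II+III=0$ that was deferred earlier.
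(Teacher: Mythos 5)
Your proof is correct and is essentially the paper's own argument: the paper cancels $[x,II]_G$ against $[x,III]_G$ by reordering the summation in the second sum and swapping the dummy indices $i,j$, which is exactly your term-by-term matching via the unordered pair $\{a,b\}$ of omitted indices with the sign flip coming from $(-1)^{i}$ versus $(-1)^{i-1}$. Your closing remark that erasing $x$ gives the proof of $II+III=0$ also matches the paper's stated intention.
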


\begin{proof}
\begin{gather*}
[x,II]_G + [x,III]_G=\\
 \sum_A 
 \sum_{j=p(\alpha)+2}^d
  \sum_{i=p(\alpha)+1}^{j-1}
 (-1)^{i+j}
[x,v^{\alpha}_1,
\dots, 
v^{\alpha}_{p(\alpha)}, 
v^{\alpha}_{p(\alpha)+1}, \dots, 
\widehat{v^{\alpha}_{i}},\dots,
\widehat{v^{\alpha}_{j}},\dots,
v^{\alpha}_{d}]_G\ +\\
 \sum_A 
 \sum_{j=p(\alpha)+1}^{d-1}
  \sum_{i=j+1}^{d}
 (-1)^{i+j-1}
[x,v^{\alpha}_1,
\dots, 
v^{\alpha}_{p(\alpha)}, 
v^{\alpha}_{p(\alpha)+1}, \dots, 
\widehat{v^{\alpha}_{j}},\dots,
\widehat{v^{\alpha}_{i}},\dots,
v^{\alpha}_{d}]_G.
\end{gather*}

This is the sum over $\alpha\in A$ of terms (where we change the order of summation in the second sum)  
\begin{gather*}
 \sum_{j=p(\alpha)+2}^d
  \sum_{i=p(\alpha)+1}^{j-1}
 (-1)^{i+j}
[x,v^{\alpha}_1,
\dots, 
v^{\alpha}_{p(\alpha)}, 
v^{\alpha}_{p(\alpha)+1}, \dots, 
\widehat{v^{\alpha}_{i}},\dots,
\widehat{v^{\alpha}_{j}},\dots,
v^{\alpha}_{d}]_G\ +\\
 \sum_{i=p(\alpha)+2}^{d}
  \sum_{j=p(\alpha)+1}^{i-1}
 (-1)^{i+j-1}
[x,v^{\alpha}_1,
\dots, 
v^{\alpha}_{p(\alpha)}, 
v^{\alpha}_{p(\alpha)+1}, \dots, 
\widehat{v^{\alpha}_{j}},\dots,
\widehat{v^{\alpha}_{i}},\dots,
v^{\alpha}_{d}]_G,
\end{gather*}
and each of these equals 0
since we can switch the dummy variables $i$ and $j$ in the second sum.
\end{proof}

So we will be all finished when we prove:
\begin{lemma}\label{final}
$[x,I]_G=0$.
\end{lemma}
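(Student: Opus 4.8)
The plan is to reduce the identity $[x,I]_G=0$ to a collection of \emph{literal} cancellations of basic sharblies, taking place near the codimension-two faces of the Voronoi tessellation. A literal cancellation --- one coming only from relations (i) and (iii) of Definition~\ref{sh}, not from the $G$-action --- is unchanged when the fixed vector $x$ is prepended to every term, so it survives passage from $I$ to $[x,I]_G$.

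As a first step I would rewrite $I$ geometrically. Splitting $\partial\bigl(\sum_A[v^\alpha_1,\dots,v^\alpha_d]_G\bigr)$ according to whether the deleted index is $\le p(\alpha)$ or $>p(\alpha)$ gives, as an identity of formal sums, $\partial\bigl(\sum_A[v^\alpha_1,\dots,v^\alpha_d]_G\bigr)=\Phi^{(1)}+\Psi_G$ with $\Phi^{(1)}=\sum_{\alpha}\sum_{i=1}^{p(\alpha)}(-1)^i[v^\alpha_1,\dots,\widehat{v^\alpha_i},\dots,v^\alpha_d]_G$; since $\partial^2=0$ holds termwise and $II+III=0$, we get $I=-\partial\Phi^{(1)}$ as formal sums. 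By Corollary~\ref{glue2}, $\Phi^{(1)}=\sum_{E\in F_G}\Phi_E$, the sum over facets of the difference of the two regular triangulations induced on $E$ by its two adjacent tiles; hence every summand of $\partial\Phi^{(1)}$ is, up to sign, a top-dimensional simplex of the triangulation induced on a codimension-two Voronoi face $D\subset\partial E$ by one of those two tiles, and $[x,I]_G=-[x,\partial\Phi^{(1)}]_G$.

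Now I would group the summands of $[x,\partial\Phi^{(1)}]_G$ by the $G$-orbit of the face $D$. There are two cases. If the basic sharbly $[v^\alpha_1,\dots,\widehat{v^\alpha_i},\dots,\widehat{v^\alpha_j},\dots,v^\alpha_d]$ is nonzero then its $d-2$ vectors span $\Q^n$, so (exactly as in the rigidification argument) the barycenter of the corresponding simplex is positive definite, $D$ meets the interior of $C$, and $\Stab_G(D)=1$ because $G$ is torsionfree; moreover $G\backslash C$ is then a manifold, so the link of $D$ is a circle and the facets of the tessellation through $D$ form a cycle $E_1,\dots,E_r$ with tiles $T_1,\dots,T_r$ between them, $E_k=T_k\cap T_{k+1}$ (indices mod $r$). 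For a face $D\subset\partial C^*$ the corresponding sharblies of $I$ vanish, but their cones need not, so one argues instead from the fact that a codimension-two face of a polytope lies in exactly two facets of it. In both situations the simplices of $\partial\Phi^{(1)}$ supported on $D$ cancel in pairs: the triangulation of $D$ induced from a tile $T$ occurs with opposite signs coming from the two facets through $D$ on the $T$-side, and --- after lifting to $C^*$ using the distinguished (rigidified) representatives chosen earlier --- these are the \emph{same} basic sharblies, the sign discrepancy being exactly the one that expresses $\partial^2=0$. Since the cancellation is literal, it persists after prepending $x$, whence $[x,\partial\Phi^{(1)}]_G=0$ and $[x,I]_G=0$.

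The step I expect to be the main obstacle is the bookkeeping behind the previous paragraph: one must check that, orbit by orbit of codimension-two faces, the summands of the formal sum $I$ actually assemble into \emph{complete} telescoping configurations around a single lift of $D$, with all the relevant orientations consistent, rather than into incomplete pieces that would have to be matched using nontrivial elements of $G$ (in which case coning by $x$ would fail). It is precisely to guarantee this that the rigidification was carried out, so that no cancelling partner of a term of $I$ is ever produced by applying the ambient $G$-action. The faces $D\subset\partial C^*$, and the facets of $F_G$ that happen to be shared by two tiles of $R_G$ rather than by a tile of $R_G$ and a $G$-translate of one, are the cases that demand the most care in this verification.
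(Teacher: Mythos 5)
Your proposal correctly isolates the crux of the lemma: the terms of $I$ cancel when passing to coinvariants, and one must show these cancellations are \emph{literal} (involving only relations (i) and (iii), or at worst elements of $G$ fixing $x$), since only such cancellations survive prepending $x$. But the step where you establish literalness is exactly the step you defer: you assert that, around each codimension-two Voronoi face, the two occurrences of each induced triangulation ``are the \emph{same} basic sharblies'' after rigidification, and then in your final paragraph you concede that verifying this bookkeeping --- that the terms assemble into complete telescoping configurations around a single lift of $D$ rather than into pieces matched by nontrivial elements of $G$ --- is ``the main obstacle.'' That obstacle is the lemma. The rigidification as performed in the paper does not by itself deliver your claim: it normalizes, one $(\alpha,j)$ at a time, the tuples $(v^\alpha_1,\dots,\widehat{v^\alpha_j},\dots,v^\alpha_d)$ so that their \emph{minimal faces} lie in $\cF$; it says nothing directly about how contributions from distinct facets $E,E'\in F_G$ sharing a codimension-two face line up, and since each facet of $F_G$ carries its own lift ($E=T_1\cap gT_2$ with its own $g$), your link-of-$D$ cycle is a priori assembled from $G$-translated pieces. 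A related technical point: prepending $x$ is not defined on coinvariant classes, only on chosen representative tuples, so the reduction $[x,I]_G=-[x,\partial\Phi^{(1)}]_G$ needs the identity $I=-\partial\Phi^{(1)}$ to hold termwise in the rigidified representatives; the passage through Corollary~\ref{glue2} and the subsequent tuple-by-tuple rigidification makes this nontrivial to arrange.

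What actually closes the gap in the paper is an argument your proposal does not contain: a direct proof that whenever $g\in G$ matches two terms of $I$, necessarily $g=1$. The key geometric input is that the deleted index $i$ lies in the circuit, so $(v^\alpha_i)''$ is in the affine span of the remaining circuit points; hence any Voronoi face containing $\{(v^\alpha_1)'',\dots,\widehat{(v^\alpha_i)''},\dots,\widehat{(v^\alpha_j)''},\dots,(v^\alpha_d)''\}$ also contains $(v^\alpha_i)''$, so the minimal face of the doubly-punctured tuple equals $f(\alpha,j)$. Consequently a relation $g[v^\alpha_1,\dots,\widehat{v^\alpha_i},\dots,\widehat{v^\alpha_j},\dots,v^\alpha_d]=\pm[v^\beta_1,\dots,\widehat{v^\beta_k},\dots,\widehat{v^\beta_\ell},\dots,v^\beta_d]$ forces $gf(\alpha,j)=f(\beta,\ell)$ with both faces in $\cF$; since these faces meet the interior of $C$ and $G$ is torsionfree, $g=1$ and so $gx=x$. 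This is where the circuit structure and the rigidification are actually used, and it is agnostic about \emph{which} terms pair up --- no telescoping or link-is-a-circle analysis is needed. Your geometric picture may well be completable, but as written it replaces the one step that requires proof with an unproven claim, while the affine-span/minimal-face argument that makes the cancellation literal is missing.
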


\begin{proof}
We know that
\[
I=
 \sum_A 
 \sum_{j=p(\alpha)+1}^d
 (-1)^j
  \sum_{i=1}^{p(\alpha)}
 (-1)^i
[v^{\alpha}_1,
\dots, \widehat{v^{\alpha}_{i}},\dots,
v^{\alpha}_{p(\alpha)}, 
v^{\alpha}_{p(\alpha)+1}, \dots, \widehat{v^{\alpha}_{j}},\dots,
v^{\alpha}_{d}]_G=0.
\]
  
Now
\[
[x,I]_G=
 \sum_A 
 \sum_{j=p(\alpha)+1}^d
 (-1)^j
  \sum_{i=1}^{p(\alpha)}
 (-1)^i
[x, v^{\alpha}_1,
\dots, \widehat{v^{\alpha}_{i}},\dots,
v^{\alpha}_{p(\alpha)}, 
v^{\alpha}_{p(\alpha)+1}, \dots, \widehat{v^{\alpha}_{j}},\dots,
v^{\alpha}_{d}]_G.
\]
We want to show $[x,I]_G=0$.
Why is $I=0$?  If we look at the universal sharbly
\[
I_{univ}=
 \sum_A 
 \sum_{j=p(\alpha)+1}^d
 (-1)^j
  \sum_{i=1}^{p(\alpha)}
 (-1)^i
[v^{\alpha}_1,
\dots, \widehat{v^{\alpha}_{i}},\dots,
v^{\alpha}_{p(\alpha)}, 
v^{\alpha}_{p(\alpha)+1}, \dots, \widehat{v^{\alpha}_{j}},\dots,
v^{\alpha}_{d}]
\]
and ask why its image in the $G$-coinvariants is 0, the answer is that its terms must cancel out in pairs (including the possibility that a term could pair with itself.)  For each such pair of terms $m,n$ there is a $g\in G$ such that $gm+n=0$.  If  the same $x$ is assigned to $m$ and $n$ and if it is fixed by $g$, then the same cancelation will occur in $[x,I]_G$.

So we will be finished when we prove the following:
\begin{sublemma}
Let $\alpha,\beta\in A$, 
$1\le i \le p(\alpha)$, $1\le k \le p(\beta)$, $j>p(\alpha)$ and 
 $\ell>p(\beta)$.
 
 (1) The minimal face $f(v^{\alpha}_1,
\dots, \widehat{v^{\alpha}_{i}},
 \dots, \widehat{v^{\alpha}_{j}},\dots,
v^{\alpha}_{d})=f(\alpha,j)$.
 
 (2)
If
\[
g[v^{\alpha}_1,
\dots, \widehat{v^{\alpha}_{i}},
 \dots, \widehat{v^{\alpha}_{j}},\dots,
v^{\alpha}_{d}]
=
[v^{\beta}_1,
\dots, \widehat{v^{\beta}_{k}},
\dots, \widehat{v^{\beta}_{\ell}},\dots,
v^{\beta}_{d}]
\]
for some $g\in G$, then  $gx=x$.
\end{sublemma}

\begin{proof}
(1)   Since $(v^\alpha_1)'',\dots,(v^\alpha_d)''$ are all in some facet $F$, they are all in   some tile $T$.
We want to show
\[
f(v^{\alpha}_1,
\dots, \widehat{v^{\alpha}_{i}},
 \dots, \widehat{v^{\alpha}_{j}},\dots,
v^{\alpha}_{d})=f(v^{\alpha}_1,
 \dots, \widehat{v^{\alpha}_{j}},\dots,
v^{\alpha}_{d}).
\]
The minimal face containing $S$ is the intersection of all the faces of the Voronoi tessellation that contain $S$.
The inclusion $\subset$ is because every face of $T$ that contains 
$(v^{\alpha}_1,
 \dots, \widehat{v^{\alpha}_{j}},\dots,
v^{\alpha}_{d})$ also contains
$(v^{\alpha}_1,
\dots, \widehat{v^{\alpha}_{i}},
 \dots, \widehat{v^{\alpha}_{j}},\dots,
v^{\alpha}_{d})$.

To show  the inclusion $\supset$, it is enough to show that 
if $\phi$ is any face of $T$ that contains 
$(v^{\alpha}_1)'',
\dots, \widehat{(v^{\alpha}_{i})''},
 \dots, \widehat{(v^{\alpha}_{j})''},\dots,
(v^{\alpha}_{d})''$
then $\phi$ also contains $(v^{\alpha}_i)''$.   

Let $\phi$ be such a face.  Then $\phi=T\cap\cH$ for some supporting affine hyperplane $\cH$.  We already know that $(v^{\alpha}_i)''\in T$.  Meanwhile, 
$(v^{\alpha}_1)'',\dots,(v^{\alpha}_i)'',\dots,(v^{\alpha}_{p(\alpha)})''$ is a circuit.  So $(v^{\alpha}_i)''$ is contained in the affine span of 
$(v^{\alpha}_1)'',\dots,
\widehat{(v^{\alpha}_i)''},\dots,(v^{\alpha}_{p(\alpha)})''$.
Since $\cH$ contains
$(v^{\alpha}_1)'',
\dots, \widehat{(v^{\alpha}_{i})''},
 \dots, \widehat{(v^{\alpha}_{j})''},\dots,
(v^{\alpha}_{d})''$,
in particular it contains 
$(v^{\alpha}_1)'',\dots,
\widehat{(v^{\alpha}_i)''},\dots,(v^{\alpha}_{p(\alpha)})''$, and therefore
it also contains $(v^{\alpha}_i)''$.  We conclude that $(v^{\alpha}_i)''\in T\cap\cH=\phi$.

(2) From the hypothesis, taking minimal faces,
 \[
g(f(v^{\alpha}_1,
\dots, \widehat{v^{\alpha}_{i}},
 \dots, \widehat{v^{\alpha}_{j}},\dots,
v^{\alpha}_{d}))
=
f(v^{\beta}_1,
\dots, \widehat{v^{\beta}_{k}},
\dots, \widehat{v^{\beta}_{\ell}},\dots,
v^{\beta}_d).
\]
By (1),
\[
gf(\alpha,j)=f(\beta,\ell).
\]
But $g$ acts freely on faces, and both $f(\alpha,j), f(\beta,\ell)$ are in 
$\cF$, the set of representatives of $G$-orbits of faces.  Therefore $g=1$ and $gx=x$.
\end{proof}
\end{proof}
 \end{proof}
\end{proof}

\section{Construction of the sharbly cycle: general $G$ }\label{cycle2}

Now let $G$ be any subgroup of finite index in $\SL_n(\Z)$.  Let $D$ be a torsionfree normal subgroup of
$G$ of finite index.  We have defined $z_D$ above, and proved that 
$\partial z_D=0$.  Our definition of $z_D$ depended on the choice of a triangulated fundamental domain for $D$ in $C^*$, which we take to be fixed.

\begin{definition} Define $\ii:Sh_D\to Sh_G$ by the 
formula $\ii([\bullet]_D)=[\bullet]_G$.  
\end{definition}

It is easy to check that $\ii$ is well-defined and commutes with taking boundary.  
(If we apply Borel-Serre duality, $\ii$ corresponds to the transfer map on cohomology.)

\begin{definition}
$z_G=[G:D]\inv\ii(z_D)$.
\end{definition}

In this definition there are implicit dependencies on the choice of $D$, the choice of regular triangulation of a fundamental domain $U_D$ for $D$ on the Voronoi cellulation, and a choice of flipons.  

\begin{theorem}
$\partial z_G=0$.
\end{theorem}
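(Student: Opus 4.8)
The plan is to reduce the statement for general $G$ to the already-established fact that $\partial z_D = 0$, using the compatibility of the map $\ii$ with boundaries. Since $z_G = [G:D]^{-1}\ii(z_D)$ by definition, and since $\ii\colon Sh_D \to Sh_G$ is a chain map (it commutes with $\partial$, as remarked just before the definition of $z_G$), we have
\[
\partial z_G = [G:D]^{-1}\,\partial\,\ii(z_D) = [G:D]^{-1}\,\ii(\partial z_D).
\]
By Theorem~\ref{tf} applied to the torsionfree group $D$ (with the fixed triangulated fundamental domain and the fixed system of flipons), $\partial z_D = 0$ in $Sh_{t-1}\otimes_D \Q$. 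Since $\ii$ is a well-defined $\Q$-linear map, $\ii(0) = 0$, so $\partial z_G = 0$.

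The only genuine content to check — and the step I would present most carefully — is that $\ii$ is well-defined and a chain map, i.e. that the formula $\ii([\bullet]_D) = [\bullet]_G$ does not depend on the choice of coset representative used to write a class in $Sh_D \otimes_D \Q$. This is because $D$ is normal in $G$, so for $g\in G$ the conjugation action permutes $D$-orbits, and more directly: a relation $[w_1,\dots,w_{n+k}]_D = [w_1',\dots,w_{n+k}']_D$ holds exactly when the two basic sharblies differ by the action of some $\delta\in D$ (or by the sign/scalar relations (i)--(iii) of Definition~\ref{sh}); since $D\subset G$, the same $\delta$ (or the same relation) shows $[w_1,\dots,w_{n+k}]_G = [w_1',\dots,w_{n+k}']_G$. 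Thus $\ii$ descends to coinvariants. That $\ii$ commutes with $\partial$ is immediate from the fact that $\partial$ is defined by the same alternating-sum formula in both complexes and passes to coinvariants on each side.

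I do not expect any real obstacle here: all the hard work (the existence of flipons making $\partial z_D = 0$, the cancellation arguments of Section~\ref{cycle1}) is already done for the torsionfree case, and the general case is a formal transfer-type consequence. The one place to be slightly careful is bookkeeping: $z_D$ lives in $Sh_t\otimes_D \Q$ while $z_G$ lives in $Sh_t\otimes_G \Q$, and one must confirm that "applying $\partial$ then $\ii$" equals "applying $\ii$ then $\partial$" at the level of these coinvariant complexes — but this is exactly the content of the parenthetical remark "$\ii$ is well-defined and commutes with taking boundary" following the definition of $\ii$, which we may invoke. Hence the proof is a two-line computation once that remark is in hand.

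\begin{proof}
Since $\ii$ is $\Q$-linear and commutes with $\partial$,
\[
\partial z_G = [G:D]^{-1}\,\partial\,\ii(z_D) = [G:D]^{-1}\,\ii(\partial z_D).
\]
By Theorem~\ref{tf}, $\partial z_D = 0$, so $\partial z_G = [G:D]^{-1}\,\ii(0) = 0$.
\end{proof}
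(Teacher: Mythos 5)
Your proof is correct and is essentially the paper's own argument: both reduce to Theorem~\ref{tf} for the torsionfree group $D$ and use that $\ii$ commutes with $\partial$, so $\partial z_G=[G:D]^{-1}\ii(\partial z_D)=0$. Your extra verification that $\ii$ descends to coinvariants (needing only $D\subset G$) is the "easy to check" remark the paper makes just after defining $\ii$, so nothing differs in substance.
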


\begin{proof}
By Theorem~\ref{tf},
 $\partial z_D = 0$.
Hence $\partial  z_{G}=[G:D]\inv\ii\partial(z_D)=0$.
\end{proof}

When all the tiles are simplicial cones, then there is no choice of
triangulation and no need for flipons.  This happens when $n=2,3,4$.  In
addition:

\begin{theorem}
Let $n=2$ or $3$.  Then $z_{\SL_n(\Z)}$ is independent of the choice of $D$.
\end{theorem}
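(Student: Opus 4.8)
The plan is as follows. For a torsionfree normal subgroup $D$ of finite index in $G=\SL_n(\Z)$, write $z_G(D)=[G:D]^{-1}\ii(z_D)$ for the class it produces; the assertion is that $z_G(D)$ is independent of $D$. Since $n=2,3$, every tile is already a simplicial cone, so $\Sigma(T)=\{T\}$ for each tile $T$: no triangulation is chosen and no flipons occur, and hence $z_D=\sum_{T\in R_D}[T]_D$, where $R_D$ is any set of representatives of the $D$-orbits on $\cT$. The first point to record is that $z_D$ does not depend on the choice of $R_D$: a different representative is $gT$ for a unique $g\in D$ and $T\in R_D$, and since the $\SL_n(\Z)$-action on $Y$ preserves the fixed orientation (the linear automorphism $X\mapsto\gamma X\,{}^t\gamma$ of $Y$ has determinant $(\det\gamma)^{n+1}=1$), we have $[gT]=g[T]$ as basic sharblies and therefore $[gT]_D=[T]_D$. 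Finally, given two such subgroups $D_1,D_2$, their intersection $D'=D_1\cap D_2$ is again torsionfree, normal and of finite index, so it suffices to show that $z_G(D)=z_G(D')$ whenever $D'\subseteq D$ are both torsionfree normal of finite index in $G$.

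The core of the argument is the identity
\[
\ii_{D'\to D}(z_{D'})=[D:D']\,z_D ,
\]
where $\ii_{D'\to D}\colon Sh_{D'}\to Sh_D$ is the natural map $[\bullet]_{D'}\mapsto[\bullet]_D$, so that $\ii_{D'\to G}=\ii_{D\to G}\circ\ii_{D'\to D}$. To prove it I would use that the stabilizer in $\SL_n(\Z)$ of a tile is finite (Section~\ref{Vor}); as $D$ is torsionfree this forces $\Stab_D(T)=1$ for every tile $T$, so $D$ acts freely on $\cT$ and each $D$-orbit of tiles splits into exactly $[D:D']$ orbits under $D'$. Choosing $g_1,\dots,g_{[D:D']}$ with $D=\bigsqcup_i D'g_i$, the family $\{\,g_iT : T\in R_D,\ 1\le i\le[D:D']\,\}$ is a set of representatives of the $D'$-orbits on $\cT$; since $z_{D'}$ is independent of this choice, applying $\ii_{D'\to D}$ to $z_{D'}=\sum_{T\in R_D}\sum_{i=1}^{[D:D']}[g_iT]_{D'}$ and using $[g_iT]_D=[T]_D$ for $g_i\in D$ gives
\[
\ii_{D'\to D}(z_{D'})=\sum_{T\in R_D}\ \sum_{i=1}^{[D:D']}[g_iT]_D=[D:D']\sum_{T\in R_D}[T]_D=[D:D']\,z_D .
\]

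Combining this with $[G:D']=[G:D]\,[D:D']$ gives
\[
z_G(D')=[G:D']^{-1}\ii_{D'\to G}(z_{D'})=[G:D]^{-1}[D:D']^{-1}\ii_{D\to G}\bigl([D:D']\,z_D\bigr)=[G:D]^{-1}\ii_{D\to G}(z_D)=z_G(D),
\]
which finishes the proof. The step I expect to be the main obstacle is the orbit identity $\ii_{D'\to D}(z_{D'})=[D:D']\,z_D$: it rests on the freeness of the $D$-action on tiles (combining torsionfreeness of $D$ with the finiteness of tile stabilizers in $\SL_n(\Z)$) and on $\SL_n(\Z)$ acting on $Y$ orientation-preservingly, so that basic sharblies attached to $G$-equivalent tiles — taken with the fixed orientation — really do agree in the coinvariants. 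The hypothesis $n=2,3$ is used only to ensure that $z_D$ has the transparent form $\sum_{T\in R_D}[T]_D$, with no dependence on a triangulation or on auxiliary flipons; once a tile must be subdivided, the induced triangulations on shared facets, and hence the secondary flipons, depend on $D$, and this direct comparison would have to be supplemented.
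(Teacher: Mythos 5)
Your proof is correct, but it proceeds along a genuinely different line from the paper. The paper exploits the special fact that for $n=2,3$ there is a \emph{single} $\SL_n(\Z)$-orbit of tiles, represented by a simplicial cone $T_0$ with finite stabilizer $H_0$: writing $z_D$ as a sum over the double cosets $D\backslash \SL_n(\Z)/H_0$ and using that the torsionfree $D$ meets $H_0$ trivially (so $H_0$ injects into $D\backslash\SL_n(\Z)$ and the number of double cosets is $[G:D]/|H_0|$), it obtains the closed formula $z_{\SL_n(\Z)}=|H_0|^{-1}[T_0]_{\SL_n(\Z)}$, which is visibly independent of $D$. You never use the one-orbit fact; instead you prove the compatibility identity $\ii_{D'\to D}(z_{D'})=[D:D']\,z_D$ for nested torsionfree normal subgroups $D'\subseteq D$ (resting on the freeness of the $D$-action on tiles, the orientation-preservation of the $\SL_n(\Z)$-action on $Y$ via $(\det\gamma)^{n+1}=1$, and the independence of $z_D$ from the choice of orbit representatives), and then handle an arbitrary pair $D_1,D_2$ by passing to $D_1\cap D_2$. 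Both arguments are sound; the trade-off is that the paper's computation yields an explicit generator and pins down its normalization, whereas your transfer-style argument is more robust -- it would apply verbatim with several orbits of simplicial tiles (so, in principle, in any situation where $z_D$ reduces to $\sum_{T\in R_D}[T]_D$ with no triangulation choices or flipons), but it does not by itself produce the explicit value $|H_0|^{-1}[T_0]_G$.
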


\begin{proof}
Let $n=2$ or $3$.  Then there is one $\SL_n(\Z)$-orbit of tile, represented by $T_0$, say, which is a simplicial cone.  
Let $H_0$ denote the stabilizer of $T_0$ in $\SL_n(\Z)$.
Let $D$ be a torsionfree normal subgroup of finite index in $\SL_n(\Z)$.  Let $\G$ be a set of representatives of the double cosets $D\backslash\SL_n(\Z)/H_0$.  Then
\[
z_D=\sum_{h\in\G} [hT_0]_D
\]
and
\[
z_G=[G:D]\inv\sum_{h\in\G} [hT_0]_{\SL_n(\Z)}=
[G:D]\inv\sum_{h\in\G} [T_0]_{\SL_n(\Z)}=
[G:D]\inv|\G| [T_0]_{\SL_n(\Z)}.
\]
Since $D$ is torsionfree,  $H_0$ injects into the finite group $D\backslash\SL_n(\Z)$.
So $z_G=|H_0|\inv [T_0]_{\SL_n(\Z)}$, independent of the choice of $D$.
\end{proof}
 
 \section{Construction of the cosharbly cocycle}\label{cococycle}

\begin{definition} A \emph{$t$-cosharbly} for $G\subset\SL_n(\Z)$ is 
a $G$-invariant linear functional $\mu: St_t\to E$
for some trivial $\Q G$-module $E$ such that $\mu$ vanishes on the
elements enumerated in (i), (ii), and (iii) of Definition~\ref{sh}.
It is a \emph{$t$-cosharbly cocycle} 
if it vanishes on $\partial([v_1,\dots,v_{d+1}])$ for
all nonzero $v_1,\dots,v_{d+1}\in \Q^n$.
\end{definition}

Note that the bilinear pairing between $t$-cosharblies and $t$-sharblies descends to a well-defined pairing between $t$-cosharbly cocycles $\mu$ and $t$-sharbly 
cycles  $z$.  If $\mu(z)\ne0$, then $z$ represents a nonzero homology class in 
$H_t(G, St)$.

Recall that if $s$ is a simplicial cone in $C^*$ with vertices $v_1',\dots,v_d'$, then $[s]$ denotes the 
sharbly $[v_1,\dots,v_d]$ where the $v_i$ are listed in an order that defines an orientation on $s$ equal to the orientation induced on it by the chosen orientation on $Y$.

\begin{theorem}\label{cocycle}
There exists  a $t$-cosharbly cocycle $\mu$ such that $\mu(z)\ne0$
for any sharbly $z\in Sh_t$ of the form
\[
z=\sum \lambda_i[s_i] +
 \sum \kappa_j  [w^{(j)}_1,\dots,w^{(j)}_{d}],
\] 
where each $\lambda_i>0$, the first sum contains at least one term, and each basic sharbly in the second sum is a flipon.
\end{theorem}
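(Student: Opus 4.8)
The plan is to build $\mu$ out of a ``signed volume'' functional coming from scissors congruences, exploiting the fact that flipons contribute zero and genuine simplicial cones contribute a strictly positive amount with matching sign. Concretely, fix the transverse hyperplane $H\subset Y$ cutting $C$, so that every nonzero $v\in\Q^n$ has a well-defined point $v''\in H$. Given a basic $t$-sharbly $[v_1,\dots,v_d]$ whose vectors span $\Q^n$, the $d$ points $v_1'',\dots,v_d''$ span an affine simplex in $H$ (of full dimension $d-1=t$ when they are affinely independent); I would define $\mu([v_1,\dots,v_d])$ to be the signed $(d-1)$-dimensional volume of the convex hull $c(v_1,\dots,v_d)$, the sign read off from the ordering of the $v_i$ versus the fixed orientation on $Y$ (equivalently on $H$), and extend $\mu$ by linearity. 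Relations (i) and (iii) of Definition~\ref{sh} are respected because signed volume is alternating in the vertices and unchanged under scaling a single $v_i$ (the point $v''$ depends only on the ray); relation (ii) is respected because a non-spanning tuple has its $v''$'s lying in a proper subspace, hence zero volume. So $\mu$ is a well-defined $t$-cosharbly with values in $E=\R$ (or $\Q$ after normalizing the volume form), and it is $G$-invariant because $G\subset\SL_n(\Z)$ acts on $Y$ preserving the chosen volume and orientation, hence acts on $H$ preserving signed volume.

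Next I would check the cocycle condition: $\mu$ must kill $\partial[v_1,\dots,v_{d+1}]$ for all nonzero $v_i\in\Q^n$. This is precisely the statement that signed volume is a \emph{valuation} (finitely additive) in the sense relevant to scissors congruence: the alternating sum $\sum_i(-1)^{i+1}\operatorname{svol}\, c(v_1,\dots,\widehat{v_i},\dots,v_{d+1})$ vanishes identically, which is the classical additivity of the oriented-volume cocycle on affine simplices — exactly the Dupont-style scissors congruence input cited in the introduction. I would either invoke that directly or give the short self-contained argument: the function $(v_1,\dots,v_{d+1})\mapsto \sum_i(-1)^{i+1}\operatorname{svol}(\dots)$ is a polynomial in the coordinates of the $v_i''$, is alternating, and vanishes whenever two of the arguments coincide and whenever the tuple is affinely degenerate; a dimension count (or the standard determinant identity expressing the volume of an $(n+1)$-point configuration) forces it to be identically zero.

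Finally, the nonvanishing on $z$. For the given $z=\sum\lambda_i[s_i]+\sum\kappa_j[w^{(j)}_1,\dots,w^{(j)}_d]$, each flipon $[w^{(j)}_1,\dots,w^{(j)}_d]$ has all its $w''$'s lying in an affine subspace of $H$ of dimension $\le d-2$ by Definition~\ref{flip} (an affine subspace of $Y$ of dimension $d-2$ meets $H$ in something of dimension $\le d-2$, hence the $d-1$ dimensional volume is $0$), so $\mu$ kills the entire second sum. Each $[s_i]$ is $[s]$ for a top-dimensional simplicial cone $s$ whose vertices $v_1'',\dots,v_d''$ are affinely independent and whose ordering was chosen in Definition~\ref{def1} to induce the orientation compatible with the fixed orientation on $Y$; therefore $\mu([s_i])=\operatorname{svol}(c(s_i))>0$, a \emph{positive} number. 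Since every $\lambda_i>0$ and the first sum is nonempty, $\mu(z)=\sum_i\lambda_i\,\operatorname{svol}(c(s_i))>0$, in particular $\mu(z)\ne 0$.

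The main obstacle I anticipate is not the volume computation but the bookkeeping of orientations and scalings needed to see that $\mu$ is genuinely well-defined on $Sh_t$ as opposed to merely on basic sharblies: one must confirm that the ``compatible orientation'' conventions of Definition~\ref{def1} (orientation of $s$ induced from $C$ versus from $Y$) are exactly the ones that make $\mu([s])>0$ rather than possibly negative, and that passing to $H$ does not introduce a sign ambiguity depending on which side of $H$ one measures from. Once the sign normalization is pinned down consistently with the orientation conventions already fixed in Section~\ref{cycle1}, the rest is the standard scissors-congruence additivity (for the cocycle property) and the elementary observation that a nondegenerate simplex has nonzero volume (for the nonvanishing). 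I would therefore devote the bulk of the written proof to the orientation/normalization lemma, cite \cite{dupont} for additivity, and dispatch the nonvanishing in one line.
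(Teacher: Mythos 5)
Your construction is essentially the paper's, but it has one genuine gap: the $G$-invariance of your $\mu$. You take $\mu([v_1,\dots,v_d])$ to be the \emph{Euclidean} signed $(d-1)$-volume of $c(v_1'',\dots,v_d'')$ in $H$, and you justify invariance by saying that $G$ preserves the volume form on $Y$, ``hence acts on $H$ preserving signed volume.'' But $G$ does not act on $H$ by affine volume-preserving maps: $g$ sends $v''$ to the radial rescaling of $g\,v''\,{}^t g$ back into the chart $y_{11}=1$, a projective transformation with nonconstant Jacobian on $H$. Already for $n=2$ and $g=\left(\begin{smallmatrix}0&-1\\1&0\end{smallmatrix}\right)$, the points $v''$ lie on the parabola $(t,t^2)$ in the $(y_{12},y_{22})$-coordinates on $H$ (with $t$ the slope of $v$); the triangle with $t=1,2,3$ has area $1$, while its image (the triangle with $t=-1,-\tfrac12,-\tfrac13$) has area $\tfrac1{36}$. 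The same failure occurs inside every finite-index $G$ (e.g.\ powers of $\left(\begin{smallmatrix}1&1\\0&1\end{smallmatrix}\right)$ rescale $y_{11}$ nontrivially). Since $G$-invariance is part of the definition of a $t$-cosharbly, your $\mu$ is not a cosharbly at all, and without invariance the pairing does not descend to $Sh_t\otimes_G\Q$, where $z_G$ lives, so Corollary~\ref{main} would not follow.

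The repair is exactly the point of the paper's proof: define $\vol(s(v_1,\dots,v_d))$ as the volume of $s/A$ with respect to an $\SL_n(\R)$-invariant volume form on $C/A$, so that $G$-invariance is automatic. But then your ``short self-contained argument'' for the cocycle identity (signed volume is a polynomial/determinant in the coordinates of the $v_i''$) is no longer available, because the invariant density on $H\cap C$ is not translation-invariant. This is where the scissors-congruence input is genuinely needed rather than optional: by Dupont's Theorem 2.10 the alternating sum $\sum_i(-1)^i(b_1,\dots,\widehat b_i,\dots,b_{d+1})$ vanishes in $\cP_H$ modulo degenerate tuples, so \emph{any} finitely additive volume --- in particular the invariant one --- kills boundaries; this is the paper's factorization $\mu=\widehat\vol\circ\psi\circ f_t$. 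Your remaining steps are fine and match the paper: flipons die because their $v_i''$ lie in an affine subspace of $H$ of dimension at most $d-2$, and the orientation convention on the $[s_i]$ makes each $\mu([s_i])$ strictly positive, giving $\mu(z)>0$.
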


Because  $z_G$ has the property of the chains in the theorem, we obtain:

\begin{corollary}\label{main}
Let $z_G$  be a cycle as constructed in the previous section.
Then $z_G$  represents a nonzero class in $H_t(G, St)$.
\end{corollary}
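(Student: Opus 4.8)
The plan is to deduce the corollary directly from Theorem~\ref{cocycle} together with the observation preceding it: if $\mu$ is a $t$-cosharbly cocycle and $z$ a $t$-sharbly cycle with $\mu(z)\ne 0$, then $z$ represents a nonzero class in $H_t(G,St)$. Since the cocycle $\mu$ of Theorem~\ref{cocycle} is $G$-invariant, the value of the pairing $\mu(z_G)$ is computed by choosing \emph{any} lift of $z_G$ from $Sh_t\otimes_G\Q$ to $Sh_t$ and applying $\mu$; the choice of lift is irrelevant because $\mu$ vanishes on elements $gm-m$. Hence it suffices to produce one lift of $z_G$ whose expression has the form required by Theorem~\ref{cocycle}: $\sum\lambda_i[s_i]+\sum\kappa_j[w^{(j)}_1,\dots,w^{(j)}_d]$ with every $\lambda_i>0$, the first sum nonempty, and every basic sharbly in the second sum a flipon.

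First I would treat the case where $G$ is torsionfree. By Theorem~\ref{tf} we have $z_G=\sum_{s\in\cS}[s]_G+\sum_\alpha[y^\alpha_1,\dots,y^\alpha_d]$, which is the image in the $G$-coinvariants of the element $w=\sum_{s\in\cS}[s]+\sum_\alpha[y^\alpha_1,\dots,y^\alpha_d]\in Sh_t$. In this lift each simplicial-cone summand $[s]$, taken with its orientation induced from $Y$, occurs with coefficient $\lambda_i=1>0$; the set $\cS$ is nonempty because it is a triangulation of the full-dimensional fundamental domain $U=\bigcup_{T\in R_G}T$; and each $[y^\alpha_1,\dots,y^\alpha_d]$ is a flipon, again by Theorem~\ref{tf}. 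So $w$ has precisely the form demanded in Theorem~\ref{cocycle}, and therefore $\mu(z_G)=\mu(w)\ne 0$.

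For an arbitrary subgroup $G$ of finite index, recall that $z_G=[G:D]^{-1}\ii(z_D)$ for a fixed torsionfree normal subgroup $D\subset G$ of finite index, where $\ii([\bullet]_D)=[\bullet]_G$. Then $z_G$ is the image in $Sh_t\otimes_G\Q$ of $[G:D]^{-1}w\in Sh_t$, where $w$ is the lift of $z_D$ constructed in the previous paragraph. In $[G:D]^{-1}w$ the first sum still consists of the simplicial cones $[s]$, now with coefficients $[G:D]^{-1}>0$, and it is nonempty since $\cS\ne\varnothing$; each $[y^\alpha_1,\dots,y^\alpha_d]$ is still a flipon, because being a flipon is a property of the vectors $y^\alpha_1,\dots,y^\alpha_d$ alone and is unaffected by which group we work over. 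Thus $[G:D]^{-1}w$ again satisfies the hypotheses of Theorem~\ref{cocycle}, so $\mu(z_G)\ne 0$, and hence $z_G$ represents a nonzero class in $H_t(G,St)$.

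This argument is essentially bookkeeping, and the only point requiring any care is verifying that the explicit expressions for $z_G$ from Sections~\ref{cycle1} and~\ref{cycle2} genuinely satisfy the hypotheses of Theorem~\ref{cocycle}; in particular, that passing to $G$-coinvariants through $\ii$ and rescaling by $[G:D]^{-1}$ preserve both the positivity of the simplicial-cone coefficients and the flipon condition on the remaining summands. All of the substantive work — the construction of the volume-type cocycle $\mu$ and the proof of its positivity on chains of this shape — has already been carried out inside Theorem~\ref{cocycle} itself, so there is no further obstacle.
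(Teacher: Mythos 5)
Your proposal is correct and follows essentially the same route as the paper: the corollary is read off immediately from Theorem~\ref{cocycle}, since $z_G$ (via its evident lift to $Sh_t$) is a chain of exactly the required shape—strictly positive coefficients on a nonempty set of simplicial-cone terms $[s]$, with all remaining terms flipons—so $\mu(z_G)\ne 0$ and the class is nonzero. Your additional bookkeeping about choosing a lift, the $G$-invariance of $\mu$, and the factor $[G:D]^{-1}$ in the non-torsionfree case merely makes explicit what the paper's one-line deduction leaves implicit.
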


The proof of Theorem~\ref{cocycle} will occupy the rest of this section.
First some preliminaries.  We have fixed an orientation on $Y$.  The
group $A=\R^\times_+$ acts on $Y$ by homotheties.  We obtain an
induced orientation on $C/A$ by choosing oriented bases of the tangent
space at each point $x$ of $C$ where the last vector points in the
$Ax$ direction (in the direction of increasing $a\in A$.)

The proof uses a theorem from the theory of scissors congruences.
We use some of the notation from
Chapters 1 and 2 of \cite{dupont}, adapted to our situation.  Let $V$
be a real vector space.  A \emph{polytope} $P\subset V$ is a finite union of
simplices $\cup \Delta_i$ such that $ \Delta_i\cap \Delta_j$ is a
common face of lower dimension if $i\ne j$.  
(Note: this is not the usual definition of a polytope, but we are following ~ \cite{dupont}.)

If $P,P_1,P_2$ are
polytopes such that $P=P_1\cup P_2$ and $P_1\cap P_2$ has no interior
points, we write $P=P_1\coprod P_2$.  Let 
$F_V$ be the free
abelian group on the symbols $[P]$ where $P$ runs over all polytopes
in $V$.
Define $\cP_V$ to be $F_V$ modulo the relators 
$[P]-[P_1]-[P_2]$ whenever $P=P_1\coprod
P_2$.

The $A$-action on $C$ commutes with the action of $\SL_n(\R)$.  Then
$C/A$ is the symmetric space for $\SL_n(\R)$ and we fix on it the
orientation described above.  Also, we fix a volume form $\nu$ on
$C/A$ invariant under $\SL_n(\R)$.

\begin{definition}
If $B$ is the cone over a polytope in $C^*$,
then $B/A$ is a polytope in $C^*/A$ and we define
\[
\vol(B)=\int_{B/A} 1\ d\nu,
\]
which is the volume of $B/A$.(\footnote{This is an improper integral, since some of the boundary of $B$
may lie in the boundary of $C^*$.  However, since the total volume of $C/A$ is finite, and the integrand is positive, the integral converges.})
If $[v_1,\dots,v_d]$ is any basic $t$-sharbly, define
\[
\mu([v_1,\dots,v_d])=e\vol (s(v_1,\dots,v_d))
\]
where $e=1$ if the order of the $v_1,\dots,v_d$ is compatible with the orientation on $s(v_1,\dots,v_d)$  induced by the orientation of $Y$, and $e=-1$ otherwise.
\end{definition}

The map  $\mu$ vanishes on the relations (i), (ii) and (iii). If $g\in\SL_n(\Z)$ then $\vol(gP)=\vol(P)$.
Therefore $\mu$ is
a $G$-invariant $\R$-valued cosharbly. It
remains to show that it satisfies the cocycle condition and that
$\mu(z_G)\ne0$.

Let $H$ be the hyperplane in $Y$ defined by $y_{11}=1$, where 
$(y_{ij})$ are the entries in the general symmetric matrix in $Y$.  Then $H\cap C$ is a section for the projection $\pi:C\to C/A$.  We fix the orientation on $H$ such that an oriented frame at a point $x$ of $H$, completed with a vector along $Ax$ in the direction of increasing $a\in A$, is compatible with the orientation we have fixed on $Y$.

For any nonzero vector $v\in\Q^n$, let $v''$ denote $av'$, 
where $a\in A$ is chosen so that $v''\in H$.  Note that $v''$ determines the line $\ell$ through $v$, because kernel of the quadratic form $v'$ is the orthogonal complement to $\ell$ with respect to the standard quadratic form on $\R^n$.

\begin{definition}
For $x_1,\dots,x_m\in H$ define $c(x_1,\dots,x_m)$ to be the convex hull of $x_1,\dots,x_m$. 
\end{definition}

Associate to a basic $t$-sharbly $M=[v_1,\dots,v_d]$ the convex hull $c(v_1'',\dots,v_d'')\subset H$.   The cone on it, namely
$Ac(v_1'',\dots,v_d'')=s(v_1,\dots,v_d)$, is the rational simplicial cone which is the convex hull of
$Av'_1,\dots,Av'_d$.
From the extremal rays $Av'_1,\dots,Av'_d$ we can 
recover $M$ up to sign.   
Knowledge of $c(x_1,\dots,x_d)$, together with
 the orientation determined by the order of
the vertices $x_1'',\dots,x_d''$,   allows us to recover $M$ on the nose, not only up to sign.

The dimension of $H$ is $d-1$. 
\begin{definition} 
We call $c(b_1,\dots,b_{d})$ \emph{proper} if it is $(d-1)$-dimensional. 
\end{definition}

\begin{definition}
\noindent Let $\epsilon(b_1,\dots,b_{d})=1$ if the orientation defined 
on $c(b_1,\dots,b_{d})$ by the ordering $(b_1,\dots,b_{d})$ of its vertices is the same as the chosen orientation on $H$, and $-1$ otherwise.
\end{definition}

We have the following tautology, which we call a ``lemma'' for ease of reference:
\begin{lemma}\label{taut}
$\mu([v_1,\dots,v_d])= \epsilon(v''_1,\dots,v''_d)\vol(c(v''_1,\dots,v''_d))$. 
\end{lemma}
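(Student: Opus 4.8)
The plan is to simply unwind the definitions, since the statement really is a tautology; the only genuine work is matching up the orientation conventions on $Y$, $C$, and $H$. Recall that by definition $\mu([v_1,\dots,v_d])=e\,\vol(s(v_1,\dots,v_d))$, where $s(v_1,\dots,v_d)$ is the rational simplicial cone spanned by the extremal rays $Av'_1,\dots,Av'_d$, and $e=\pm1$ according to whether the ordering $(v_1,\dots,v_d)$ induces on $s(v_1,\dots,v_d)$ the orientation compatible with the fixed orientation of $Y$. If $c(v''_1,\dots,v''_d)$ is not $(d-1)$-dimensional, both sides vanish (the left because $s(v_1,\dots,v_d)$ is then not top-dimensional, so has volume $0$ in $C^*/A$, the right by definition of $\vol$), so we may assume it is proper.

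First I would recall from the discussion preceding the lemma that each extremal ray $Av'_i$ is precisely the ray through $v''_i\in H$, so that $s(v_1,\dots,v_d)=A\,c(v''_1,\dots,v''_d)$, the cone over the polytope $c(v''_1,\dots,v''_d)\subset H$. Since $\vol$ of the cone over a polytope in $C^*$ is by definition the $\nu$-volume of its image in $C^*/A$, and since $H\cap C$ is a section of $\pi\colon C\to C/A$, the quantity $\vol(s(v_1,\dots,v_d))$ equals the $\nu$-volume of the image in $C^*/A$ of $c(v''_1,\dots,v''_d)$; this is exactly what the notation $\vol(c(v''_1,\dots,v''_d))$ on the right-hand side abbreviates. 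Hence the magnitudes of the two sides agree, and it remains only to check that $e=\epsilon(v''_1,\dots,v''_d)$.

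For the sign I would use the precise normalization of the orientation on $H$: an oriented frame at a point $x\in H$, completed by a vector along $Ax$ in the direction of increasing $a\in A$, is required to be compatible with the fixed orientation of $Y$. Pick an interior point $x$ of $c(v''_1,\dots,v''_d)$, say the barycenter of the $v''_i$, and note (a routine computation with the vectors $v''_i-x$, which sum to zero) that the ordering $(v_1,\dots,v_d)$ induces on the $d$-dimensional cone $s(v_1,\dots,v_d)$ the orientation represented by $(d-1)$ vectors tangent to $H$ realizing the orientation of $c(v''_1,\dots,v''_d)$ given by the vertex ordering $(v''_1,\dots,v''_d)$, followed by one radial vector along $Ax$. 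By the defining property of the orientation on $H$, this frame is compatible with the orientation of $Y$ if and only if the ordering $(v''_1,\dots,v''_d)$ realizes the chosen orientation of $H$, i.e.\ if and only if $\epsilon(v''_1,\dots,v''_d)=1$. Therefore $e=\epsilon(v''_1,\dots,v''_d)$, and combining this with the previous paragraph yields $\mu([v_1,\dots,v_d])=\epsilon(v''_1,\dots,v''_d)\,\vol(c(v''_1,\dots,v''_d))$. The only obstacle is this sign bookkeeping — one must be sure that the vertex-ordering orientation on $c(v''_1,\dots,v''_d)$ together with the radial direction fits together exactly as in the normalization chosen for $H$ — but there is no substantive difficulty beyond keeping the conventions straight.
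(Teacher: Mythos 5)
Your proposal is correct and takes essentially the same route as the paper, which offers no argument at all beyond calling the statement a tautology: by definition $\vol(c(v''_1,\dots,v''_d))=\vol(A\,c(v''_1,\dots,v''_d))=\vol(s(v_1,\dots,v_d))$, and the sign is just the matching of the orientation conventions on $Y$, $C$, and $H$, which is exactly what you spell out. The only caveat is that your ``routine computation'' at the barycenter is where the unstated conventions live: with the usual vertex-ordering orientation on the simplex versus the ordered-basis orientation on the cone one picks up a factor $(-1)^{d-1}$, but since this is a single global sign independent of the sharbly (and the paper implicitly fixes conventions so that it is $+1$), it affects nothing in how the lemma is used.
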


Let $C_*(H)$ be the chain complex where $C_k(H)$ is the free abelian group generated by 
all $k+1$-tuples $(b_1,\dots,b_{k+1})$ where $b_i\in H$.  
We call such a $k+1$-tuple a 
\emph{basic element} of $C_*(H)$.
The boundary map is the usual one:
\[
\partial(b_1,\dots,b_{k+1})=\sum_{i=1}^{k+1} (-1)^{i+1}(b_1,\dots,\widehat b_i,
\dots, b_{k+1}).
\]
The convex 
hull $c(b_1,\dots,b_{k+1})$ of $(b_1,\dots,b_{k+1})$ in $H$ is a (possibly degenerate) $k$-simplex.

For $(b_1,\dots b_{d})\in C_{d-1}$ define
\[
\phi((b_1,\dots b_{d})) = 
\epsilon(b_1,\dots b_{d})c(b_1,\dots b_{d}) 
{\text {\  if  $c$ is proper, and $0$ otherwise}}.
\]

Then Theorem 2.10 of \cite{dupont} says that $\phi$ induces an isomorphism 
\[
\phi: C_{d-1}(H)/(\partial C_d(H)+C_{d-1}(H)^{-})  \to \cP_H,
\]
where $C_{j}(H)^{-}$ denotes the subgroup of $C_{j}(H)$
generated by $(b_1,\dots b_{j+1})$ such that there is an affine subspace of $H$ of 
dimension $j-1$ containing  $\{b_1,\dots b_{j+1}\}$.  

Denote the result of tensoring a $\Z$-module with $\Q$ by a subscript $\Q$.  Since $\Q$ is flat over $\Z$, we obtain
an isomorphism of $\Q$-vector spaces
 \[
\psi: C_{d-1}(H)_\Q/(\partial C_d(H)_\Q
+C_{d-1}(H)^{-}_\Q)  \to (\cP_H)_\Q.
\]

Because basic sharblies of $Sh_*$ are antisymmetric but basic
elements of $C_*(H)$ are not, we define $C^{a}_*(H)$ to be the anti-symmetrized quotient complex of $C_*(H)$:
\[
C^{a}_m(H)=C_m(H)/M_m
\]
where $M_m$ is spanned by elements of the form
\[
(b_1,\dots,b_{m+1})-sign(\tau) (b_{\tau(1)},\dots,b_{\tau(m+1)}),\ \ \tau\in S_{m+1}.
\]
If $(b_1,\dots,b_{m+1})\in C_m(H)$, we denote its image in $C^{a}_m(H)$
by $(b_1,\dots,b_{m+1})^a$.

The boundary map descends to $C^a_*(H)$ and we define $C^a_{d-1}(H)^{-}$ to be the image of $C_{d-1}(H)^{-}$ in $C^a_{d-1}(H)$.  Since $\phi$ vanishes on $M_{d-1}$,
 $\psi$ descends to a map
 \[
\psi: C^a_{d-1}(H)_\Q/(\partial C^a_d(H)_\Q
+C^a_{d-1}(H)^{-}_\Q)  \to (\cP_H)_\Q.
\]

\begin{definition}
For $m\ge0$, define the linear map of $\Q$-vector spaces 
\[f_m:Sh_m\to C^a_{m+n-1}(H)_\Q/C^a_{m+n-1}(H)^{-}_\Q\] 
by setting
$f_m([v_1,\dots,v_{m+n}])=(v''_1,\dots,v''_{m+n})^a$ on  basic sharblies and extending by linearity. 
\end{definition}
Clearly $f_m$ commutes with the boundary maps, so it induces a map
\[f_m:Sh_m/\partial(Sh_{m+1})\to C^a_{m+n-1}(H)_\Q/
(\partial C^a_{m+n}+C^a_{m+n-1}(H)^{-}_\Q).\]

We now complete the proof of the theorem. 
The composition
\[\psi\circ f_t: Sh_t/\partial(Sh_{t+1}) \to (\cP_H)_\Q\] maps
a basic sharbly 
$[v_1,\dots,v_{d}]$ to the class of  
$\epsilon(v''_1,\dots,v''_{d})c(v''_1,\dots,v''_{d})$.

\begin{definition}
For any polytope $P$ in $H$, define $\vol(P)=\vol(AP)$.
\end{definition}
We extend $\vol$ to a function 
$\widehat\vol:F_V\to\R$ by linearity.
In particular $\widehat\vol(-[P])=-\widehat\vol([P])$.
If $P_1,P_2$ are two polytopes in $H$, then 
$\vol(P_1\coprod P_2) = \vol(P_1) +  \vol(P_2)$.  It follows that  $\widehat\vol$ descends to a homomorphism of abelian groups
$\widehat\vol:(\cP_H)_\Q\to\R$.

By Lemma~\ref{taut}, $\mu= \widehat\vol\circ\psi\circ f_t$.  Therefore
$\mu$ vanishes on boundaries and hence is a $t$-cosharbly cocycle for
$G$.  Also, $\mu$ vanishes on basic sharblies $[w_1,\dots,w_t]$ with
the property that $w'_1,\dots,w'_t$ lie in a hyperplane in $Y$,
because $\psi$ vanishes on $C_{d-1}(H)_\Q^-$.  Hence $\mu(z)$ is the
sum of positive terms, and thus is nonzero.  This completes the proof
of Theorem~\ref{cocycle}

\section{Examples}
In this section we discuss the cases where $n=2,3,4,5$ and $G=\SL_n(\Z)$.  
Let $\{e_i\}$ denote the standard basis of $\Q^n$.

For $n=2$ there is one $\SL_2(\Z)$-orbit of tiles, represented by a
tile with vertices corresponding to the minimal vectors
$e_1,e_2,e_1-e_2$.  The corresponding perfect quadratic form is known
as $A_{2}$.(\footnote{More information about perfect forms can be
found at the sources \cite{cs,cs2,catalogue,MM}.})  The boundary
of this tile is a simplex, so there are no flipon terms in $z_G$.  The
stabilizer of this tile in $\SL_2(\Z)$ has order 6.  Then
\[
z_G=(1/6)[e_1,e_2,e_1-e_2]_G.
\]
One can see directly that $\partial z_G=0$.  In fact,
\[
\partial z_G=[e_2,e_1-e_2]_G-[e_1,e_1-e_2]_G+[e_1,e_2]_G.
\]
Let 
\[
g=\begin{bmatrix} 0&1\\-1&0\end{bmatrix},\ 
h=\begin{bmatrix} 0&1\\-1&-1\end{bmatrix}
\in\SL_2(\Z).
\]
Then $g[e_1,e_2]=[e_2,e_1]=-[e_1,e_2]$, so the last term on the right
hand side equals 0.  Also $h[e_1,e_2]=[e_2,e_1-e_2]$, so $hgh\inv
[e_2,e_1-e_2]=- [e_2,e_1-e_2]$ and the first term on the right hand
side equals 0.  Similarly the middle term equals 0.
 
For $n=3$ there is only one $\SL_3(\Z)$-orbit of tiles, represented by
the tile $T_0$ with vertices corresponding to the minimal vectors
$e_1,e_2,e_3,e_1-e_2,e_1-e_3,e_2-e_3$; the corresponding perfect form
is known as $A_{3}$.  The boundary of this tile is a simplex, so there
are no flipons in $z_G$.  The stabilizer of this tile has order 24.
Then
\[
z_G=(1/24)[e_1,e_2,e_3,e_1-e_2,e_1-e_3,e_2-e_3]_G.
\]
One can also see easily in this case that $\partial z_G=0$. There are
six terms in the boundary of $z_G$.  The stabilizer of $T_0$ acts
transitively on them, so it suffices to show that any one of them
equals zero, for example, to show that
\[
[e_1,e_2,e_3,e_1-e_2,e_2-e_3]_G=0.
\]
Let 
\[
k=\begin{bmatrix} 0&0&-1\\0&1&1\\1&0&0\end{bmatrix}
\in\SL_3(\Z).
\]
Then $$k[e_1,e_2,e_3,e_1-e_2,e_2-e_3]=[e_3,e_2,e_1-e_2,e_2-e_3,e_1]=
-[e_1,e_2,e_3,e_1-e_2,e_2-e_3].$$

For $n=4$
there are two $\SL_4(\Z)$-orbits of tiles, represented by $T_0$ and
$T_1$.  Here $T_0$ is a simplicial cone, corresponding to the perfect
form $A_{4}$, so it gives one term in
$z_G$.  The tile $T_1$ corresponds to the perfect form known as
$D_{4}$; it is a polytope with 12 vertices, and can be
subdivided into 16 simplicial cones, giving 16 terms in $z_G$.  The
facets of $T_{1}$ are all simplices, so we don't need any flipons.
Still, we would not want to check $\partial z_G=0$ by hand.

If the reader should wish to write down $z_G$ explicitly for $n=4$,
this can be done using the following information.  The tile $T_0$ has
vertices corresponding to the vectors $e_1,...,e_4,e_i-e_j$ (for $1\le
i < j \le 4$).  The vertices of the 16 simplicial cones whose union
are the tile $T_1$ correspond to sets of vectors as follows:

Let $[a,b,c,d]$ denote the vector $ae_1+be_2+ce_3+de_4$.  The vertices
of $T_1$ correspond to the column vectors in the following matrix:

\[
\begin{bmatrix}
-1&-1&-1&-1&-1&-1&0&0&0&0&0&0\\
-1&0&0&0&0&1&-1&-1&-1&-1&0&0\\
0&-1&0&0&1&0&-1&0&0&1&-1&-1\\
1&1&0&1&0&0&1&0&1&0&0&1
\end{bmatrix}
\]

Label the columns $0$, $1$, \dots , $11$.  Then we can describe each
of the simplicial cones whose union make up $T_1$ by giving its
vertices, according to these labels.  Of course, such a simplicial
subdivision is not unique, but one we have found has 16 simplicial
cones.  Here we give the lists of vertices for each cone:

\begin{center}
\begin{tabular}{lll}
\{0, 1, 2, 3, 4, 5, 6, 7, 8, 10\},&\{0, 1, 3, 4, 5, 6, 7, 8, 9, 10\},&\{0, 1, 2, 4, 5, 6, 7, 8, 9, 10\},\\
\{0, 1, 2, 3, 4, 5, 7, 8, 9, 10\},&\{1, 2, 3, 4, 5, 6, 7, 8, 10, 11\},&\{1, 3, 4, 5, 6, 7, 8, 9, 10, 11\},\\
\{1, 2, 4, 5, 6, 7, 8, 9, 10, 11\},&\{1, 2, 3, 4, 5, 7, 8, 9, 10, 11\},&\{0, 1, 2, 3, 4, 6, 7, 8, 10, 11\},\\
\{0, 1, 3, 4, 6, 7, 8, 9, 10, 11\},&\{0, 1, 2, 4, 6, 7, 8, 9, 10, 11\},&\{0, 1, 2, 3, 4, 7, 8, 9, 10, 11\},\\
\{0, 1, 2, 3, 4, 5, 7, 8, 9, 11\},&\{0, 1, 2, 4, 5, 6, 7, 8, 9, 11\},&\{0, 1, 3, 4, 5, 6, 7, 8, 9, 11\},\\
&\{0, 1, 2, 3, 4, 5, 6, 7, 8, 11\}.&\\
\end{tabular}
\end{center}

For $n=5$, there are three tiles modulo $\SL_5(\Z)$, $T_0$, $T_1$, and
$T_2$.  The vertices of $T_0$, which corresponds to the perfect form
$A_{5}$, correspond to $e_1,...,e_5,e_i-e_j$ (for $1\le i < j \le 5$).
This is a simplicial cone, so naturally all of its facets are
simplicial.  The tile $T_{1}$ corresponds to the perfect form known as
$A_{5}^{+3}$; its vertices correspond to the columns of
\[
\begin{bmatrix}
 1&0&1&0&0&0&1&0&1&0&0&0&1&0&1\\
 1&1&0&0&0&0&1&1&0&0&1&1&0&0&0\\
 1&1&1&1&1&1&0&0&0&0&0&0&0&0&0\\
 1&1&1&0&1&0&1&0&0&0&1&0&1&1&0\\
 1&1&1&1&0&0&1&1&1&1&0&0&0&0&0
\end{bmatrix}
\]

The cone $T_1$ is also simplicial, so all of its facets are
simplicial.  So for these two cones no flipons are needed.  Finally
the cone $T_2$ for the perfect form $D_{5}$ has 20 spanning rays,
corresponding to the vectors
\[
\begin{bmatrix}
 0&1&0&0&0&0&1&0&1&0&1&0&1&0&0&1&0&0&1&1\\
 0&0&1&0&0&1&0&0&0&1&1&0&0&1&0&0&1&1&-1&0\\
 0&0&0&1&1&0&0&0&0&0&0&1&1&1&1&-1&-1&0&0&0\\
 1&-1&-1&-1&1&1&1&1&0&0&0&0&0&0&0&0&0&0&0&0\\
 0&0&0&0&-1&-1&-1&-1&-1&-1&-1&-1&-1&-1&0&0&0&0&0&0
\end{bmatrix}
\]
It has 400 facets, 320 of which are simplicial (and therefore have 14
spanning rays), but 80 of which have 16 spanning rays.  We will not
attempt to write down $z_G$ in this case, but it may be of interest to
give an example of two non-simplicial facets of $T_2$ that could be
mapped one to the other by an element of $SL_5(\Z)$ together with the
flipons necessary to convert one simplicial subdivision into the
other.  The following data was obtained by the use of the programs
\texttt{Sage} \cite{sage} and \texttt{polymake} \cite{polymake}.

Let us index the vertices of $T_2$ by $0,\dots,19$.
Then one of its non-simplicial facets $F$ has the 16 vertices
$[0,1,3,4,5,6,7,8,9,11,12,13,14,15,18,19]$.
We can triangulate $F$ with the following simplices:
\begin{gather*}
\{0,1,3,4,5,6,7,8,9,11,12,13,14,18\}\\
\{0,1,3,4,5,6,7,8,9,11,13,14,15,18\}\\
\{0,1,3,4,5,6,7,9,11,12,13,14,18,19\}\\
\{0,1,3,4,5,6,7,9,11,13,14,15,18,19\}\\
\{0,1,3,4,5,6,8,9,11,12,13,14,15,18\}\\
\{0,1,3,4,5,6,9,11,12,13,14,15,18,19\}\\
\{0,1,3,5,6,7,8,9,11,12,13,14,18,19\}\\
\{0,1,3,5,6,7,8,9,11,13,14,15,18,19\}\\
\{0,1,3,5,6,8,9,11,12,13,14,15,18,19\}\\
\{0,1,4,5,6,7,8,9,11,12,13,14,18,19\}\\
\{0,1,4,5,6,7,8,9,11,13,14,15,18,19\}\\
\{0,1,4,5,6,8,9,11,12,13,14,15,18,19\}\\
\{1,3,4,5,6,7,8,9,11,12,13,14,15,18\}\\
\{1,3,4,5,6,7,9,11,12,13,14,15,18,19\}\\
\{1,3,5,6,7,8,9,11,12,13,14,15,18,19\}\\
\{1,4,5,6,7,8,9,11,12,13,14,15,18,19\}.
\end{gather*}
We can imagine a different triangulation, perhaps coming from another facet in the $G$-orbit of $F$, for example
\begin{gather*}
\{0,1,3,4,5,6,7,8,9,11,12,13,15,18\}\\
\{0,1,3,4,5,6,7,8,9,12,13,14,15,18\}\\
\{0,1,3,4,5,6,7,9,11,12,13,15,18,19\}\\
\{0,1,3,4,5,6,7,9,12,13,14,15,18,19\}\\
\{0,1,3,4,5,7,8,9,11,12,13,14,15,18\}\\
\{0,1,3,4,5,7,9,11,12,13,14,15,18,19\}\\
\{0,1,3,5,6,7,8,9,11,12,13,15,18,19\}\\
\{0,1,3,5,6,7,8,9,12,13,14,15,18,19\}\\
\{0,1,3,5,7,8,9,11,12,13,14,15,18,19\}\\
\{0,1,4,5,6,7,8,9,11,12,13,15,18,19\}\\
\{0,1,4,5,6,7,8,9,12,13,14,15,18,19\}\\
\{0,1,4,5,7,8,9,11,12,13,14,15,18,19\}\\
\{0,3,4,5,6,7,8,9,11,12,13,14,15,18\}\\
\{0,3,4,5,6,7,9,11,12,13,14,15,18,19\}\\
\{0,3,5,6,7,8,9,11,12,13,14,15,18,19\}\\
\{0,4,5,6,7,8,9,11,12,13,14,15,18,19\}.
\end{gather*}

In fact, the secondary polytope of $F$ has 3 vertices, so these are 2
of the 3 regular triangulations of this facet corresponding to the
vertices of secondary polytope.  The flip that takes the first of
these triangulations to the second is described by a pair $(T_+,T_1)$ contained in a
circuit.  In this flip, the (cones on the) simplices in $T_+$ are
replaced by (the cones on) those in $T_-$.  To describe the flip, we
renumber the vertices of $F$ as $0\dots,15$, because this was mandated
by the software we use.

The simplices in $T_+$ and $T_-$ are all 6-simplices.  $T_+$  is a list
of 4 such, and $T_-$ is a list of 4 such.  
\begin{gather*}
T_+=\{\{0,1,5,6,9,10,13\}, \{0,1,5,6,10,12,13\}, \\ \{0,1,6,9,10,12,13\}, \{0,5,6,9,10,12,13\}\},\\
T_-=  \{\{0,1,5,6,9,10,12\}, \{0,1,5,6,9,12,13\},\\ \{0,1,5,9,10,12,13\}, \{1,5,6,9,10,12,13\}\}.
 \end{gather*}
  
We list only the vertices that change in the flip, the rest staying
the same.  In other words, this flip has non-maximal dimension.  The
circuit consists of 8 vertices, and the simplices of $T_+$ and $T_-$
are coned off by the remaining vertices to achieve the relevant
triangulations of $F$.

The situation is the same with all three possible regular
triangulations of the facet $F$.  Any two of them are connected by
just one flipon, where the circuit has 8 vertices.

\begin{remark} One might hope, generalizing from the cases $n=2,3$,
that we could construct a sharbly cycle in the following way: Let
$A_n$ be the the perfect form whose minimal vectors are
$\{e_1,\dots,e_n\} \cup \{e_i-e_j\ | \ 1\le i < j \le n \}$.
Enumerate the minimal vectors as $v_1,\dots, v_d$.  Then perhaps
\[
\partial [v_1,\dots,v_d]_G=0.
\]
Unfortunately, this is only true for $n=2$ or $3$.  Here is the reason:

First of all, $ [v_1,\dots,v_d]=[s_{A_n}]$.  Theorem 7.5.1
in~\cite{MM} implies that the tile $s_{A_n}$ has a unique $G$-orbit of
facets, and if $n>3$, if $T$ is a tile such that $T\cap s_{A_n}$ is a
facet of both $T$ and $s_{A_n}$, then $T$ is not in the same $G$-orbit
as $s_{A_n}$.  It follows that if $n>3$ then the $G$-stabilizer of a
facet $F$ of $s_{A_n}$ is a subgroup of the $G$-stabilizer of
$s_{A_n}$.  Since $G$ preserves orientation on $C$, there will be no
element in the $G$-stabilizer of $F$ that reverses the orientation of
$F$, and therefore $\partial
[v_1,\dots,v_d]_G=d[v_2,\dots,v_d]_G\ne0$.\end{remark}

\section{Appendix: Triangulations of Polytopes}\label{app}

In this appendix we gather the results we need about triangulations of
polytopes.  If the polytopes in question are all contained in some
hyperplane of $Y$ that cuts $C^*-\{0\}$ transversally, then all these
results immediately carry over to the cones on the polytopes with
vertex $0$.  The reference for all these theorems is chapter 7
in~\cite{GKZ}.  In this section we use the definition of ``polytope''
found in~\cite{GKZ}, rather than the one we used before when
discussing scissors congruences.  Every polytope in the new sense is
also a polytope in the old sense.  Let $V$ denote an affine space over
$\R$ of dimension $n$.  We fix an orientation on $V$.

\begin{definition}
A \emph{polytope} $P$ is the convex hull of a finite number of points
of $V$.   
If $P$ is of full dimension $n$, we give it the orientation
induced from $V$.  The vertices of $P$ are its extreme points
and $\ver(P)$
denotes the set of vertices of $P$.   A
\emph{facet} of $P$ is a codimension 1 face of it. 

A \emph{triangulation} of $P$ is a
collection of simplices $\{\sigma_i\}$, such that for each $i$,
$\ver(\sigma_i)\subset\ver(P)$, $\cup_i\sigma_i=P$ and if $i\ne j$,
$\sigma_i\cap\sigma_j$ is either empty or a common face of $\sigma_i$
and $\sigma_j$.  A \emph{regular triangulation} of $P$ is one that
obeys Definition 1.3 in Chapter 7 of~\cite{GKZ}.(\footnote{These are
called \emph{coherent triangulations} in~\cite{GKZ}, but most later
authors call them \emph{regular triangulations}.})  A different but equivalent
definition may be found in Section 16.3 of
\cite{book}.(\footnote{Chapter 16 by C. W. Lee and F. Santos in this
book collects a lot of useful information about polytopes and their
triangulations.})

If $S$ is any subset of $V$ let $c(S)$ denote the convex hull of $S$.
\end{definition}

If $g$ is an affine transformation of $V$ and $\{\sigma_i\}$ is a triangulation of $P$, then $\{g \sigma_i\}$ is  a triangulation of $P$.  
If $\{\sigma_i\}$ is regular, then $\{g \sigma_i\}$ is also regular.  If $Q$ is a facet of $P$, and $\{\sigma_i\}$ is a regular triangulation of $P$, then the triangulation of $Q$ induced by $\{\sigma_i\}$ is regular.

If $v_1,\dots,v_{m}\in V$, let $c(v_1,\dots,v_{m})$ denote their convex hull.     If
 $\ver(P)=\{v_1,\dots,v_m\}$, then $P=c(v_1,\dots,v_m)$.  
 If $m=n+1$ and $v_1,\dots,v_{m}$ are affinely independent, then
$c(v_1,\dots,v_{m})$ is a simplex.
We give it the orientation induced by $V$. 
 
\begin{definition}  Let $m\le n+2$.
A \emph{circuit} is a set of $m$ points in $V$ such that any $m-1$ of them are affinely independent.  
\end{definition}
If $m=n+2$ we say that the circuit is ``of maximal dimension''.
Note that if $Z$ is a circuit, then every facet of $c(Z)$ is a simplex.

Proposition 1.2 of~\cite{GKZ} says the following:  Let $Z$ be a circuit.
Then $c(Z)$ has exactly two triangulations, $T_+$ and $T_-$.  They can be described as follows:   There is a partition of $Z$ into two nonempty sets: $Z=Z_+\coprod Z_-$. such that $T_+$ consists of the simplices $c(Z-\omega)$, 
as $\omega$ ranges through the elements of $Z_+$, and $T_-$ consists of the simplices $c(Z-\omega)$
for $\omega\in Z_-$.

\begin{definition}
Let $P$ be a polytope and $Z$ a circuit contained in $\ver(P)$.  Suppose $P$ is triangulated, so that $c(Z)\subset P$ has the induced triangulation $T_+$.  Then if we instead triangulate $c(Z)$ by $T_-$, and take the triangulation of $P$ given by all simplices of the form
$c(I\cup F)$ where $I$ is a simplex (possibly empty) in $T_-$ and $F$ is any subset (possibly empty) of 
$\ver(P)-Z$, we 
say that the new triangulation of $P$ is obtained from the old one by a \emph{flip}.(\footnote{This is called a \emph{modification} in~\cite{GKZ}.})
Similarly if we reverse the roles of $T_+$ and $T_-$, we say that new triangulation is obtained from the old one by a flip. 
\end{definition}

The \emph{secondary polytope} $\Sigma(P)$ is defined in Definition 1.6 of Chapter 7 of~\cite{GKZ}.  Being a polytope, it is nonempty and connected.  By Theorem 1.7 of the same source, the vertices of $\Sigma(P)$ correspond one-to-one with all regular triangulations of $P$.
By Theorem 2.10 of the same source, the edges of $\Sigma(P)$ correspond to flips that take one vertex of an edge to the other vertex of that edge.  We conclude that any two regular triangulations of $P$ are connected by a sequence of flips.

We want to write a useful formula for a flip.  Let $p=m+2$.
 Let $T_1$ and $T_2$ be regular triangulations of $P$ connected by a flip with respect to the circuit $Z=\{z_1,\dots,z_{p}\}$, where 
$\{z_1,\dots,z_{p}\}\subset \ver(P)$.  
Let $s_1,\dots,s_r$ be the simplices in $T_1$ listed so that 
 $s_1,\dots,s_k$ are the simplices that get removed in the flip,
 to be replaced by the simplices $u_1,\dots,u_{p-k}$.
 Then the simplices in $T_2$ are
 $u_1,\dots,u_{p-k},s_{k+1},\dots,s_r$.  
 We give all these simplices the orientation induced from $V$.
Then
 \[
\bigcup_i \{c(z_1,\dots, \widehat z_i,\dots, z_{p})\} =
 \{s_1,\dots,s_k,u_1,\dots,u_{p-k}\}.
 \]

  Let $A_{r}$ be the free $\Z$-module on symbols $(a_1,\dots,a_{r})$ with $a_i \in V$, modulo the relations 
\[
(a_{\sigma(1)},\dots,a_{\sigma({r})})=\sign(\sigma) (a_1,\dots,a_{r})
\]
for all $\sigma\in S_{r}$.
If $a$ is an oriented $r-1$-simplex, write $a^*$ for the symbol 
$(a_1,\dots,a_{r})$ where the $a_i$ run through the vertices of $a$ 
and are in an order that induces the given orientation on $a$.

\begin{theorem}\label{glue}
Notations as above.  Then in $A_{p-1}$ we have the equality
 \[
e\sum_i (-1)^i (z_1,\dots, \widehat z_i,\dots, z_{p}) =
(s_1^*+\dots +s_k^*) - (u_1^*+\dots+u^*_{p-k}),
 \]
 where $e$ is either 1 or $-1$.
 \end{theorem}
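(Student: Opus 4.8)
The plan is to compare the two sides of the claimed identity as formal $\Z$-linear combinations of oriented $(p-2)$-simplices living in $A_{p-1}$, using the concrete description of the two triangulations $T_+$ and $T_-$ of $c(Z)$ from Proposition 1.2 of~\cite{GKZ}. Write $Z=Z_+\coprod Z_-$ for the canonical partition, so that $T_+=\{c(Z-\omega):\omega\in Z_+\}$ and $T_-=\{c(Z-\omega):\omega\in Z_-\}$. Matching this with the labelling in the statement, the simplices $s_1,\dots,s_k$ removed in the flip are exactly the $c(Z-z_i)$ with $z_i\in Z_+$ (say, after reindexing, $z_1,\dots,z_k\in Z_+$), and the replacements $u_1,\dots,u_{p-k}$ are the $c(Z-z_i)$ with $z_i\in Z_-$ (i.e.\ $z_{k+1},\dots,z_p\in Z_-$). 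Thus as \emph{unoriented} simplices the multiset $\{s_1,\dots,s_k,u_1,\dots,u_{p-k}\}$ equals $\{c(z_1,\dots,\widehat z_i,\dots,z_p):i=1,\dots,p\}$, which is the displayed set equality already recorded just before the theorem; the entire content of the theorem is therefore about \emph{signs}.

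The key step is to pin down the orientations. Each $s_i^*$ and $u_j^*$ is, by definition, the symbol $(a_1,\dots,a_{p-1})$ with the $a_\ell$ listed in an order inducing the orientation that $V$ induces on that simplex. On the other side, $(z_1,\dots,\widehat z_i,\dots,z_p)$ is a symbol whose sign in $A_{p-1}$ is governed only by the order in which the remaining vertices are listed, \emph{not} by the orientation of the underlying simplex. So I would proceed as follows. Fix once and for all an affine isomorphism identifying $\mathrm{aff}(Z)$ with $\R^{p-2}$ compatibly with the chosen orientation of $V$; then each $c(Z-z_i)$ acquires a sign $\varepsilon_i\in\{\pm1\}$ telling whether the vertex order $(z_1,\dots,\widehat z_i,\dots,z_p)$ induces the $V$-orientation or its opposite, and by definition $s_i^*=\varepsilon_i(z_1,\dots,\widehat z_i,\dots,z_p)$ for $z_i\in Z_+$, and likewise for $u_j^*$. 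The theorem then reduces to the claim that there is a global sign $e\in\{\pm1\}$ with $\varepsilon_i=e(-1)^{i}$ for every $i=1,\dots,p$; equivalently, that adjacent $\varepsilon_i$'s alternate.

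To see the alternation I would use the standard boundary computation for a simplex on $p$ vertices. Consider the $(p-1)$-simplex $c(z_1,\dots,z_p)$ (or, in the degenerate case where $Z$ affinely spans only a $(p-2)$-plane, a small generic perturbation / an auxiliary cone on an extra vertex); its boundary, computed with the simplicial sign rule, is $\sum_{i=1}^p(-1)^{i+1}(z_1,\dots,\widehat z_i,\dots,z_p)$, and this is a cycle whose terms, \emph{when each is reoriented to agree with the induced orientation of the ambient $(p-2)$-plane}, must alternate in sign relative to one another — this is exactly the statement that the facets of an oriented simplex receive consistently alternating induced orientations. Reading this off gives $\varepsilon_i(-1)^{i+1}$ independent of $i$, i.e.\ $\varepsilon_i=e(-1)^{i}$ with $e$ a fixed sign (the sign comparing the abstract orientation convention to the $V$-orientation). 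Plugging back in, $s_i^*+u_j^*$ summed over all $i$ equals $\sum_i \varepsilon_i(z_1,\dots,\widehat z_i,\dots,z_p)=e\sum_i(-1)^i(z_1,\dots,\widehat z_i,\dots,z_p)$, which is the asserted identity.

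The main obstacle is the degenerate case: in the intended application (Theorem~\ref{tf}) the circuit $Z$ need not be of maximal dimension, so $c(Z)$ may sit inside a hyperplane of $V$ and "the orientation induced from $V$" on each $c(Z-z_i)$ is not literally defined. I would handle this by working inside $\mathrm{aff}(Z)$ with a fixed auxiliary orientation — the theorem's sign $e$ absorbs the ambiguity of that choice, since changing it flips all $\varepsilon_i$ simultaneously and hence only flips $e$. Everything else is the routine simplicial-sign bookkeeping sketched above; once the convention in the degenerate case is fixed, the alternation argument is uniform.
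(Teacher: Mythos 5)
There is a genuine gap: your reduction of the theorem to ``the orientation signs $\varepsilon_i$ alternate in $i$'' is false, because it drops the Radon-partition signs. With your notation the right-hand side of the theorem is $\sum_{z_i\in Z_+}\varepsilon_i(z_1,\dots,\widehat z_i,\dots,z_p)-\sum_{z_i\in Z_-}\varepsilon_i(z_1,\dots,\widehat z_i,\dots,z_p)$, so what must alternate is $\sigma_i\varepsilon_i$, where $\sigma_i=\pm1$ records whether $z_i\in Z_+$ or $z_i\in Z_-$; the $\varepsilon_i$ by themselves need not alternate. Concretely, take the square circuit $z_1=(0,0)$, $z_2=(1,0)$, $z_3=(1,1)$, $z_4=(0,1)$ in the plane: each facet $(z_1,\dots,\widehat z_i,\dots,z_4)$, written in that order, is positively oriented, so $\varepsilon_i=+1$ for all $i$, while the affine relation $z_1-z_2+z_3-z_4=0$ gives $\sigma_i=(-1)^{i+1}$, and only the products $\sigma_i\varepsilon_i$ alternate. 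Your closing line, which adds $\sum_i s_i^*$ and $\sum_j u_j^*$ instead of subtracting them, is a symptom of the same dropped sign. A correct direct argument along your lines is possible, but it must use the affine dependence $\sum_i\lambda_i z_i=0$, $\sum_i\lambda_i=0$, together with Cramer's rule, which yields $\mathrm{sign}(\lambda_i)=e\,(-1)^i\varepsilon_i$; this is precisely where the partition $Z=Z_+\coprod Z_-$ re-enters and cannot be discarded.

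The geometric justification is also misdirected. A circuit is always affinely dependent, so $c(z_1,\dots,z_p)$ is always a degenerate $(p-1)$-simplex whose facets all lie in the common $(p-2)$-plane $\mathrm{aff}(Z)$: the ``non-degenerate case'' you treat as primary never occurs, and for a genuine $(p-1)$-simplex the claim that its facets ``receive consistently alternating induced orientations relative to the ambient $(p-2)$-plane'' has no meaning, since those facets lie in different hyperplanes; a generic perturbation changes the orientation comparisons exactly according to $Z_+$ versus $Z_-$ (upper versus lower facets of the lifted simplex), which again is the sign you omitted. The paper's proof avoids this bookkeeping entirely: it notes that the right-hand side equals $\sum_i\epsilon(i)(z_1,\dots,\widehat z_i,\dots,z_p)$ for some signs $\epsilon(i)$, that its boundary vanishes because it is the difference of two triangulations of the same oriented polytope $c(Z)$, and then invokes the purely combinatorial Lemma~\ref{signs}, which says the only sign patterns making such a chain a cycle are $\epsilon(i)=(-1)^i$ and $\epsilon(i)=(-1)^{i+1}$. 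Either adopt that route, or repair your computation by proving the Cramer's-rule identity above; as written, the key alternation claim is false and the proof does not go through.
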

 
 \begin{proof}
 Define the boundary map $\partial:A_p\to A_{p-1}$ by 
 \[
 (a_1,\dots,a_p)=\sum_i (-1)^i (a_1,\dots,\widehat a_i,\dots,a_p).
 \]
We know that the right hand side of the displayed equation in the statement of the theorem equals
$\sum_i \epsilon(i) (z_1,\dots, \widehat z_i,\dots, z_{p})$ for some choice of 
signs $\epsilon(i)=\pm1$.  Because it is the difference of two triangulations of the same polytope $c(Z)$, its boundary is 0.  So we will be finished when we prove the following lemma.

 \begin{lemma}\label{signs}
 The only choices  of signs $\epsilon(i)=\pm1$ such that
 \[
 \partial \sum_i \epsilon(i) (z_1,\dots, \widehat z_i,\dots, z_{p})=0
 \]
 are $\epsilon(i)=(-1)^i$ or $\epsilon(i)=(-1)^{i+1}$.
 \end{lemma}

Compute:  
\begin{gather*}
\partial \sum_i \epsilon(i) (z_1,\dots, \widehat z_i,\dots, z_{p})=\\
\sum_{j<i}(-1)^j  \sum_i \epsilon(i) (z_1,\dots, \widehat z_j,\dots,\widehat z_i,\dots, z_{p})+\\
\sum_{j>i}(-1)^{j-1}  \sum_i \epsilon(i) (z_1,\dots, \widehat z_i,\dots,\widehat z_j,\dots, z_{p}).
\end{gather*}
The term $(z_1,\dots,z_p)$ with both $z_i$ and $z_j$ omitted occurs twice and the sum of those terms must be 0.
In particular, choose $a>1$ and take $j=1$ and $i=a$ in the first sum, and $i=1$, $j=a$ in the second.  Then
$-\epsilon(a) (z_2,\dots,\widehat z_a,\dots, z_{p})+
(-1)^{a-1}   \epsilon(1) (z_2,\dots,\widehat z_a,\dots, z_{p})=0$.
It follows that $\epsilon(a)=(-1)^{a-1} \epsilon(1)$ for all $a$.
\end{proof}

\begin{corollary}\label{glue0}
Let $P$ be a polytope of dimension $n$ and $p=m+2\le n+1$.
 Let $T_1$ and $T_2$ be regular triangulations of $P$ connected by a flip with respect to the circuit $Z=\{z_1,\dots,z_{p}\}$, where 
$\{z_1,\dots,z_{p}\}\subset \ver(P)$.  Let $x_{p+1},\dots,x_{n+2}$ be the remaining vertices of $P$.
Let 
 $w_1,\dots,w_k$ be the simplices that get removed in the flip,
 to be replaced by the simplices $y_1,\dots,y_{p-k}$.
Then
 \[
\sum_{i=1}^{p} 
(-1)^i (z_1,\dots, \widehat z_i,\dots, z_{p},x_{p+1},\dots,x_{n+2}) =
(w_1^*+\dots +w_k^*) - (y_1^*+\dots+y^*_{p-k})
 \]
\end{corollary}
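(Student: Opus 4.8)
The plan is to recognize Corollary~\ref{glue0} as the ``coned-off'' version of Theorem~\ref{glue} and to prove it by appending the fixed tuple $x_{p+1},\dots,x_{n+2}$ to every symbol occurring in the identity of that theorem. First I would check that $(a_1,\dots,a_{p-1})\mapsto(a_1,\dots,a_{p-1},x_{p+1},\dots,x_{n+2})$ extends to a well-defined $\Z$-linear map $\jmath\colon A_{p-1}\to A_{n+1}$: a permutation of the first $p-1$ slots changes the source symbol and its image by one and the same sign, so $\jmath$ respects the antisymmetry relations defining $A_{p-1}$ and $A_{n+1}$. Applying $\jmath$ to the left-hand side $e\sum_i(-1)^i(z_1,\dots,\widehat z_i,\dots,z_p)$ of Theorem~\ref{glue} produces $e\sum_i(-1)^i(z_1,\dots,\widehat z_i,\dots,z_p,x_{p+1},\dots,x_{n+2})$, that is, $e$ times the left-hand side of the corollary.

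The next step is to identify $\jmath$ applied to the right-hand side of Theorem~\ref{glue} with $(w_1^{\ast}+\dots+w_k^{\ast})-(y_1^{\ast}+\dots+y_{p-k}^{\ast})$. The key point is a dimension count: since $\ver(P)-Z=\{x_{p+1},\dots,x_{n+2}\}$ has exactly $n+2-p$ elements, and a simplex $c(I\cup F)$ arising in the flip (with $I$ a $(p-2)$-simplex of one of the two triangulations of the circuit $c(Z)$, and $F\subseteq\ver(P)-Z$) has dimension $|I|+|F|-1=(p-2)+|F|$, such a simplex is $n$-dimensional exactly when $F=\ver(P)-Z$. Hence the simplices of the two triangulations of $P$ related by the flip are precisely the joins $c(\ver(s_j)\cup\{x_{p+1},\dots,x_{n+2}\})$ for $s_j\in T_+$ and $c(\ver(u_j)\cup\{x_{p+1},\dots,x_{n+2}\})$ for $u_j\in T_-$; that is, $w_j=c(\ver(s_j)\cup\{x_{p+1},\dots,x_{n+2}\})$ and $y_j=c(\ver(u_j)\cup\{x_{p+1},\dots,x_{n+2}\})$. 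Because $(Z-\{z_i\})\cup\{x_{p+1},\dots,x_{n+2}\}$ is the vertex set of an honest $n$-simplex of $P$ for each $i$, it is affinely independent, so the affine span of $x_{p+1},\dots,x_{n+2}$ is complementary to that of $Z$; choosing the orientation of each lower-dimensional simplex $s_j,u_j$ so that its vertex order, followed by $x_{p+1},\dots,x_{n+2}$, induces the orientation on $w_j,y_j$ coming from $V$, one gets $\jmath(s_j^{\ast})=w_j^{\ast}$ and $\jmath(u_j^{\ast})=y_j^{\ast}$ on the nose. With these orientations Theorem~\ref{glue} still holds for an appropriate $e\in\{\pm1\}$, since its proof via Lemma~\ref{signs} is insensitive to the orientation conventions used for the circuit simplices. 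Combining, $\jmath$ carries the identity of Theorem~\ref{glue} to $e\cdot(\text{LHS of Corollary~\ref{glue0}})=(w_1^{\ast}+\dots+w_k^{\ast})-(y_1^{\ast}+\dots+y_{p-k}^{\ast})$.

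The one genuinely delicate point — and where I expect the real work to lie — is to check that, with the conventions in the statement ($w_j$ the removed simplices, $y_j$ the added ones, full-dimensional simplices oriented from $V$, and alternating coefficients $(-1)^i$ on the left), this global sign is $+1$; equivalently, that the removed/added dichotomy is correlated with the Radon partition $Z=Z_+\coprod Z_-$ in the way forced by the orientations. This is exactly the sign bookkeeping carried out in the proof of Lemma~\ref{signs}: that computation uses only the antisymmetry relations in $A_{\bullet}$, and goes through verbatim when the extra fixed entries $x_{p+1},\dots,x_{n+2}$ are appended (they are spectators when one omits two of the $z_i$), reducing the matter to pinning down the overall sign in a single base case. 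Beyond that, nothing is needed that is not already contained in Theorem~\ref{glue} and the flip description in the Appendix.
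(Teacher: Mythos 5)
Your route is the same as the paper's implicit one: the paper gives no separate argument for Corollary~\ref{glue0}, treating it as Theorem~\ref{glue} joined with the remaining vertices, and your appending map $\jmath$, the dimension count identifying the removed/added $n$-simplices as the joins $c(\ver(s_j)\cup\{x_{p+1},\dots,x_{n+2}\})$, and the observation that the orientations you put on the circuit simplices are coherent (so the ``difference of two triangulations of $c(Z)$'' argument behind Theorem~\ref{glue} still applies) are exactly the missing details, correctly supplied.

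The one flawed point is the final step, where you promise to pin the global sign to $+1$ by the ``sign bookkeeping of Lemma~\ref{signs}.'' That lemma, by construction, only determines the coefficients up to a global factor $e\in\{\pm1\}$ --- that is precisely why Theorem~\ref{glue} is stated with an undetermined $e$ --- and no amount of such bookkeeping can remove it here, because the sign genuinely depends on data the statement does not fix: relabeling two of the $z_i$ (or two of the $x_j$, or exchanging which triangulation is called $T_1$) flips the left-hand side of the displayed identity while leaving $(w_1^*+\dots+w_k^*)-(y_1^*+\dots+y_{p-k}^*)$ unchanged. So the identity of Corollary~\ref{glue0} holds only up to the same $e=\pm1$ as in Theorem~\ref{glue}, equivalently after choosing the ordering of $z_1,\dots,z_p$ suitably (as is tacitly done in the pyramid example). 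This weaker form is all that is needed downstream, since Corollaries~\ref{glue1} and~\ref{glue2} are existential and absorb the sign into the choice of the sequences $z^\alpha_1,\dots,z^\alpha_{p(\alpha)}$; but you should state it that way rather than claim an unconditional $+1$.
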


\begin{remark}
An example might help.  Suppose $n=3$, $p=4$ and the circuit $1,2,3,4$ consisting of vertices that form a square in a plane.   Let $P$ be the pyramid which is the cone from vertex $5$ on that square $[1234]$.  Consider the flip (not of maximal dimension) that takes the triangulation $\{[1235],[1345]\}$ to the triangulation $\{[1245],[2345]\}$.  Then the formula in Corollary~\ref{glue0} becomes
\[
-[2345]+[1345]-[1245]+[1235] = ([1235]+[1345]) - ([2345]+[1245]).
\]
\end{remark}

\begin{corollary}\label{glue1}
Let $P$ be a polytope of full dimension $n$.
Let $\{w_1,\dots,w_m\}$ and $\{y_1,\dots,y_{b}\}$ be two regular triangulations of $P$, where all the orientations of the maximal simplices are those induced by $V$.

  Then there exists 
  a positive integer $k$, and for  $1\le\alpha\le k$
non-repeating sequences  $z^\alpha_1,\dots, z^\alpha_{p(\alpha)}$ and $x^\alpha_{p(\alpha)+1},\dots,x^\alpha_{n+2}$,
all consisting of vertices of $P$,
  such that in $A_{n+2}$ we have the equality
 \[
\sum_\alpha \sum_{i=1}^{p(\alpha)} 
(-1)^i (z^\alpha_1,\dots, \widehat z^\alpha_i,\dots, z^\alpha_{p(\alpha)},x^\alpha_{p(\alpha)+1},\dots,x^\alpha_{n+2}) =
(w_1^*+\dots +w_m^*) - (y_1^*+\dots+y^*_{b}),
 \]
 where
$ \{x^\alpha_{p(\alpha)+1},\dots,x^\alpha_{n+2}\}=
\ver(P)-\{z^\alpha_1,\dots, z^\alpha_{p(\alpha)}\}$.
 \end{corollary}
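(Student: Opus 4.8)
\emph{Proof plan.} The plan is to reduce to the case of a single flip and then telescope. As observed just before Theorem~\ref{glue}, the secondary polytope $\Sigma(P)$ is connected and its edges are flips; hence the two given regular triangulations $\{w_1,\dots,w_m\}$ and $\{y_1,\dots,y_b\}$ of $P$ can be joined by a finite chain of regular triangulations
\[
\{w_1,\dots,w_m\}=S_0,\ S_1,\ \dots,\ S_N=\{y_1,\dots,y_b\},
\]
in which $S_{\alpha-1}$ and $S_\alpha$ are related by a single flip, along a circuit $Z^\alpha=\{z^\alpha_1,\dots,z^\alpha_{p(\alpha)}\}\subset\ver(P)$; put $\{x^\alpha_{p(\alpha)+1},\dots,x^\alpha_{n+2}\}=\ver(P)\setminus Z^\alpha$. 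For a triangulation $S$, write $\langle S\rangle=\sum_{\sigma\in S}\sigma^*$, each maximal simplex $\sigma$ being oriented by $V$. Consecutive $S_{\alpha-1}$ and $S_\alpha$ share all but finitely many maximal simplices, so the identity
\[
(w_1^*+\dots+w_m^*)-(y_1^*+\dots+y_b^*)=\langle S_0\rangle-\langle S_N\rangle=\sum_{\alpha=1}^{N}\bigl(\langle S_{\alpha-1}\rangle-\langle S_\alpha\rangle\bigr)
\]
telescopes, and it suffices to put each $\langle S_{\alpha-1}\rangle-\langle S_\alpha\rangle$ in the required form.

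Fix $\alpha$ and write $p=p(\alpha)$. If $p\le n+1$, the pair $S_{\alpha-1},S_\alpha$ is exactly the situation of Corollary~\ref{glue0} (applied to $P$ with circuit $Z^\alpha$ and remaining vertices $x^\alpha_{p+1},\dots,x^\alpha_{n+2}$): the maximal simplices they do not share are the $w_i$ (removed) and the $y_j$ (added), and
\[
\langle S_{\alpha-1}\rangle-\langle S_\alpha\rangle=\pm\sum_{i=1}^{p}(-1)^i\,(z^\alpha_1,\dots,\widehat{z^\alpha_i},\dots,z^\alpha_{p},x^\alpha_{p+1},\dots,x^\alpha_{n+2}).
\]
If instead $p=n+2$, then $c(Z^\alpha)=P$, there are no remaining vertices, the two triangulations are the $T_\pm$ of the single circuit $P$, and Theorem~\ref{glue} gives the same identity (again up to a sign $\pm1$), now with no $x^\alpha$'s. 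In either case the sign in front of the alternating sum can be normalized to $+1$ by interchanging the labels $z^\alpha_1$ and $z^\alpha_2$: this interchange swaps the two terms that omit $z^\alpha_1$, resp.\ $z^\alpha_2$, and reverses the sign of each of the other $p-2$ terms (which contain both $z^\alpha_1$ and $z^\alpha_2$), so it multiplies the whole sum by $-1$. Relabelling each circuit in this way, we may assume
\[
\langle S_{\alpha-1}\rangle-\langle S_\alpha\rangle=\sum_{i=1}^{p(\alpha)}(-1)^i\,(z^\alpha_1,\dots,\widehat{z^\alpha_i},\dots,z^\alpha_{p(\alpha)},x^\alpha_{p(\alpha)+1},\dots,x^\alpha_{n+2}).
\]

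Summing these identities over $\alpha=1,\dots,N$ and taking $k=N$ now yields precisely the asserted equality; the sequences $z^\alpha_1,\dots,z^\alpha_{p(\alpha)}$ are non-repeating because they enumerate a circuit, and $x^\alpha_{p(\alpha)+1},\dots,x^\alpha_{n+2}$ enumerate $\ver(P)\setminus Z^\alpha$, as required. I expect the only real difficulty to be the orientation bookkeeping: one must make sure that the maximal simplices of each $S_\alpha$, oriented by $V$, match on the nose the oriented simplices $w_i^*,y_j^*$ (resp.\ $s_i^*,u_j^*$) coming out of Corollary~\ref{glue0} (resp.\ Theorem~\ref{glue}), and that, after the relabelling above, the coefficient in front of the alternating sum is $(-1)^i$ rather than $(-1)^{i+1}$. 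Everything else is an application of the connectivity of the secondary polytope together with the single-flip formula already proved.
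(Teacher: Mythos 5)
Your proposal is correct and follows essentially the same route as the paper: the paper's proof of Corollary~\ref{glue1} is exactly "connect the two regular triangulations by a sequence of flips (connectivity of the secondary polytope) and apply Theorem~\ref{glue} / Corollary~\ref{glue0} to each flip," which is your telescoping argument. Your extra care with the overall sign (normalizing $e=\pm1$ by swapping $z^\alpha_1$ and $z^\alpha_2$, which indeed multiplies the alternating sum by $-1$ in $A_{n+2}$) is a correct refinement of bookkeeping the paper leaves implicit.
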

 
 \begin{proof}
 This follows from the theorem because any two regular triangulations are connected by a sequence of flips.   
 \end{proof}

We now apply this to triangulations of tiles in $C^*$.
Let $H$ be the hyperplane of $Y$ defined in Section~\ref{cococycle} and as in that section if $v\in\Q^n$, let $v''$ be the intersection of the ray through $v'$ with $H$.  If $v_1,\dots,v_m\in\Q^n$, let $c(v''_1,\dots,v''_m)$ be the convex hull in $H$.  Then the convex cone $s(v'_1,\dots,v'_m)$ generated by $v'_1,\dots,v'_m$ is the cone over $c(v''_1,\dots,v''_m)$, and the intersection of $s(v_1,\dots,v_m)$ with $H$ is $c(v''_1,\dots,v''_m)$.  
When $m=d$ and $v'_1,\dots,v'_d$ are linearly independent in $Y$, we
 give the simplex $c(v''_1,\dots,v''_d)$ the orientation determined by the order 
 $v_1,\dots,v_d$.
 
Suppose $s$ is a simplicial cone in $C^*$ with vertices $v_1',\dots,v_d'$.  As usual, we give $s=s(v_1,\dots,v_d)$  
the orientation  induced by the fixed orientation on $Y$.
As in the last bullet of Definition~\ref{def1}, we list the $v_i$ in an order that defines this orientation on 
$s$ and define the
sharbly $[s]=[v_1,\dots,v_d]$, where the $v_i$ are listed in this order.  Then $c(v''_1,\dots,v''_d)$ also has the orientation induced from $Y$.
 
 \begin{definition} Let $X$ be a tile or a facet.  Then a
\emph{regular triangulation} of $X$ is a triangulation of it whose
intersection with $H$ is a regular triangulation of $X\cap
H$.  \end{definition}
 
 It follows from what was said earlier in this section that if $g\in\SL_n(\Z)$ and $\{s\}$ is a regular
  triangulation of $X$ then $\{gs\}$ is a regular
  triangulation of $gX$.  If  $\{s\}$ is a regular
  triangulation of a tile $T$, then the
  triangulation it induces on a facet of $T$ is regular.
  
Recall Definition~\ref{flip}: A flipon is a basic $t$-sharbly
$[v_1,\dots,v_d]$ such that there is an affine $(d-2)$-space in $Y$
that contains $v_i'$ for all $i=1,\dots,d$.

\begin{corollary}\label{glue2}
Let there be two regular compatibly 
oriented triangulations $\{w\}$ and $\{y\}$  of the same oriented facet of a tile.  Then there exists flipons $[v^\alpha_1,\dots,v^\alpha_d]$ and integers $p(\alpha)\ge3$ such that 
 \[
\sum_\alpha \sum_{i=1}^{p(\alpha)} (-1)^i
[v^{\alpha}_1,\dots, \widehat {v^{\alpha}_i},\dots, v^{\alpha}_{d}] =
\sum [w] - \sum [y].
 \]
\end{corollary}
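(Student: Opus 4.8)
The plan is to deduce Corollary~\ref{glue2} from Corollary~\ref{glue1} by intersecting all cones with the hyperplane $H$ of Section~\ref{cococycle}, where the Appendix's language of polytopes, circuits, flips and regular triangulations applies verbatim. Let $F$ be the given oriented facet. Then $F\cap H$ is a polytope of full dimension $d-2$ in its affine span $V\subset H$, and I would put on $V$ the orientation that $Y$ induces on the cone over it (exactly as at the end of the Appendix), so that it agrees with the chosen orientation of $F$. By the definition of a regular triangulation of a facet, the triangulations $\{w\}$ and $\{y\}$ are the cones over two regular triangulations of $F\cap H$, which are compatibly oriented with $V$.

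Next I would apply Corollary~\ref{glue1} to $P=F\cap H$, with the role of ``$n$'' played by $d-2$, so that ``$n+2$'' is $d$. This produces an integer $k$ and, for $1\le\alpha\le k$, circuits $z^\alpha_1,\dots,z^\alpha_{p(\alpha)}$ in $F\cap H$ --- whence $p(\alpha)\ge 3$, each $z^\alpha$-tuple being the circuit of a flip --- together with the remaining vertices $x^\alpha_{p(\alpha)+1},\dots,x^\alpha_{d}$ of the subpolytope carrying the $\alpha$-th flip, and the identity
\[
\sum_\alpha\sum_{i=1}^{p(\alpha)}(-1)^i(z^\alpha_1,\dots,\widehat{z^\alpha_i},\dots,z^\alpha_{p(\alpha)},x^\alpha_{p(\alpha)+1},\dots,x^\alpha_{d})=\sum w^{*}-\sum y^{*}
\]
in $A_{d-1}$, where $w^{*}$, $y^{*}$ denote the symbols of the simplices $w\cap H$, $y\cap H$ taken with the $V$-orientation. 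Now I would transport this to $Sh_{t-1}$: each vertex of $F$ equals $v''$ for a vector $v\in\Q^n$ unique up to a positive scalar, so fix such a $v$ for each vertex and let $v^\alpha_r$ be the chosen vector with $(v^\alpha_r)''=z^\alpha_r$ for $r\le p(\alpha)$ and $(v^\alpha_r)''=x^\alpha_r$ for $r>p(\alpha)$. The rule $(u_1'',\dots,u_{d-1}'')\mapsto[u_1,\dots,u_{d-1}]$ is a well-defined linear map $A_{d-1}\to Sh_{t-1}$, since it respects antisymmetry and, by relations (i) and (iii) of Definition~\ref{sh} together with the fact that $v''$ determines the line through $v$, is unaffected by the scaling ambiguity; under it $w^{*}\mapsto[w]$ and $y^{*}\mapsto[y]$, by our choice of orientation on $V$. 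Applying the map to the displayed identity gives
\[
\sum_\alpha\sum_{i=1}^{p(\alpha)}(-1)^i[v^{\alpha}_1,\dots,\widehat{v^{\alpha}_i},\dots,v^{\alpha}_{d}]=\sum[w]-\sum[y],
\]
which is the claim, provided each $[v^\alpha_1,\dots,v^\alpha_d]$ is a flipon.

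The flipon verification is the step I expect to cost the most thought, since it is the one place forcing a passage between the geometry of $H$ and that of $Y$. All of $(v^\alpha_1)'',\dots,(v^\alpha_d)''$ are vertices of the single facet $F$, hence lie in $F\cap H$, a set of dimension $d-2$, so $c(v^\alpha_1,\dots,v^\alpha_d)$ has dimension at most $d-2$. I would then argue that this forces $(v^\alpha_1)',\dots,(v^\alpha_d)'$ into a $(d-2)$-dimensional affine subspace of $Y$: using that $v'$ is a positive multiple of $v''$ whose scalar is the $(1,1)$-entry of $v'$, an affine equation of the span of $F\cap H$ inside $H$ becomes a linear condition on $Y$ (cutting the rays down to a $(d-1)$-dimensional linear subspace), and the circuit $z^\alpha_1,\dots,z^\alpha_{p(\alpha)}$ supplies the extra affine dependence among the $(v^\alpha_r)'$ needed to drop the dimension by one more; hence $[v^\alpha_1,\dots,v^\alpha_d]$ is a flipon in the sense of Definition~\ref{flip}. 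Once that point is pinned down, everything else is routine bookkeeping with orientations and with the dictionary between cones in $C^*$ and their sections by $H$.
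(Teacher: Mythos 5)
Your proposal is correct and follows essentially the same route as the paper, whose entire proof of Corollary~\ref{glue2} is to apply Corollary~\ref{glue1} to the cross-section of the facet by $H$ and to note that a circuit has at least three elements, so $p(\alpha)\ge3$; you merely make explicit the dictionary between tuples of points of $H$ and sharblies, together with the orientation bookkeeping. One caveat on your flipon check: your first observation --- that all $(v^\alpha_i)''$ lie in the affine span of $F\cap H$, an affine subspace of $Y$ of dimension $d-2$ --- is already the whole verification under the only reading of Definition~\ref{flip} that is invariant under relation (iii) of Definition~\ref{sh}, and it is exactly the criterion the paper itself applies (for instance when checking that the summands of $\Omega$ are flipons in the proof of Theorem~\ref{tf}); by contrast, your further attempt to place the specific points $v\,{}^t v$ in a $(d-2)$-dimensional affine subspace does not quite work, since rescaling a circuit relation $\sum_i\lambda_i (v^\alpha_i)''=0$ with $\sum_i\lambda_i=0$ yields only a linear, not an affine, dependence among the $(v^\alpha_i)'$. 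Drop that superfluous step and the argument is complete.
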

Note that the alternating sum over $i$ is only part of the boundary of the flipon-sharbly, except when the flip has maximal dimension. 
\begin{proof}
This follows from easily Corollary~\ref{glue1}.   The reason $p(\alpha)>2$ is that a circuit has to have at least three elements.  \end{proof}

\bibliographystyle{amsalpha}
\bibliography{AGM-Puzzle-Paper}

\end{document}